\numberwithin{equation}{section}
\newtheorem{Prop}{Proposition}
\newtheorem{Thm}{Theorem}
\newtheorem{Def}{Definition}
\newtheorem{Lem}{Lemma}
\begin{document}
\title{Exact covering systems in number fields}
\author{Yupeng Jiang, Yingpu Deng\\\\ \quad Key Laboratory of Mathematics
Mechanization,
\\NCMIS, Academy of Mathematics and Systems Science,
\\Chinese Academy of Sciences, Beijing 100190, P.R. China\\ E-mail addresses: \{jiangyupeng,dengyp\}@amss.ac.cn}
\date{}
\maketitle

\begin{abstract}
It is well known that in an exact covering system in $\mathbb{Z}$,
the biggest modulus must be repeated. Very recently, Kim gave
an analogous result for certain quadratic fields, and Kim also conjectured that it must hold in any algebraic number field. In this paper, we prove Kim's conjecture. In other words, we prove that exact covering systems in any algebraic number
field must have repeated moduli.
\end{abstract}

\noindent \textbf{Keywords:} \quad Exact covering systems, Lattice
parallelotopes, Chinese remainder theorem

\section{Introduction}

For $a, n\in\mathbb{Z}$ and
$n>1$, let $a\,\text{mod}\,n$ denote the set of
integers $\{a+kn\mid\ k\in\mathbb{Z}\}$. A finite collection of
congruence classes $\{a_i\,\text{mod}\,n_i{}\}_{i=1}^k$ is called
a covering system in $\mathbb{Z}$ if each integer belongs to at
least one congruence class. This concept was first introduced by
Erd\"os \cite{Erdos} in 1950, who constructed an infinite
arithmetic sequence of odd integers not representable as $2^n+p$
($p$ a prime) using a covering system. If moreover, each integer
belongs to exactly one congruence class, we say that it is an exact
covering system.

Kim generalized the definition to algebraic number fields \cite{Kim0, Kim1}.
Given a number field $K$, denote its ring of algebraic integers by
$\mathcal{O}_K$. For  $\alpha\in\mathcal{O}_K$ and a
nonzero ideal $I$ of $\mathcal{O}_K$ with $I\neq\mathcal{O}_K$, let
$\alpha+I$ denote the set of all the algebraic integers $\beta$ satisfying
$\beta\equiv\alpha\mod I$. $\{\alpha_i+I_i\}_{i=1}^k$ is called a covering
system in $K$ if each element in $\mathcal{O}_K$ belongs to at
least one congruence class. Of course we call it an exact covering
system if each element in $\mathcal{O}_K$ belongs to exactly one
congruence class.

Let $\{a_i\,\text{mod}\,n_i{}\}_{i=1}^k$ be  an exact covering
system in $\mathbb{Z}$. We assume $2\le n_1\le
n_2\le\dots\le n_k$. In 1971, M. Newman \cite{M.newman} proved that
$$n_{k-p(n_k)+1}=\dots=n_k,$$ where for an integer $n$,
$$p(n)=\min\{p\mid\ p|n,\ p\ \text{prime}\}.$$ In
\cite{nonanalytic}, M. A. Berger, A. Felzenbaum and A. S. Fraenkel
called a modulus $n_i$ division maximal if for $1\le j\le k$,
$n_i|n_j$ implies $n_i=n_j$. They proved that each division maximal $n_i$ must
be repeated at least $p(n_i)$ times. As the biggest modulus is division maximal,
this result generalizes Newman's. In \cite{improvements}, the
same authors got an improved result that each modulus $n_i$ must be repeated at least
$$\min\{G\left(\frac{n_i}{{\mbox {gcd}}(n_i,n_j)}\right)\mid\ n_j\neq n_i\}$$ times, where for an integer $n$,
$$G(n)=\max\{p^r\mid\ p^r|n,\ p\ \text{prime},r\geq0\}.$$ This result is better. For a
division maximal $n_i$, $n_j\neq n_i$ implies that ${\mbox {gcd}}(n_i,
n_j)$ is a proper divisor of $n_i$, so $G(n_i/{\mbox
{gcd}}(n_i,n_j))\ge p(n_i)$.

For exact covering systems in a number field $K$, very recently, in
\cite{Kim}, Kim asked whether there exists an exact covering system with distinct moduli in a number field and
he gave some nonexistence results when $K$ is a certain type of quadratic field with
additional constraints on the modulus ideals $I_i$. We are going to
prove the analogous results in \cite{nonanalytic} and
\cite{improvements} for exact covering systems in any number field
$K$.

This paper is organized as follows. In Section 2, we introduce
some notations about covering systems in number fields and
Kim's results \cite{Kim}. In Section 3, we'll introduce some
definitions from \cite{nonanalytic}, and then prove the analogous
result of \cite{nonanalytic} for number fields. We'll prove the
analogous result of \cite{improvements} for number fields in
Section 4. Both results greatly generalize the results of Kim in
\cite{Kim}.

\section{Notations and known results}

We always use the following notations and decompositions. Let $K$
be a number field, $\mathcal{O}_K$ its ring of integers and
$\{\alpha_i+I_i\}_{i=1}^k$ an exact covering system
of $\mathcal{O}_K$. Let $I=\cap_{i=1}^kI_i$ with prime ideal
decomposition
$$I=\prod_{j=1}^l\mathfrak{p}_j^{r_j},$$ where $\mathfrak{p}_j$'s are prime
$\mathcal{O}_K$-ideals and $r_j>0$. Each $I_i$ has decomposition
$$I_i=\prod_{j=1}^l\mathfrak{p}_j^{r_{i,j}}$$ with $0\le r_{i,j} \le
r_j$. For each $\mathfrak{p}_j$, assume that its norm satisfies
$N(\mathfrak{p}_j)=n_j$. Here the norm $N(I)$
of a nonzero ideal $I$ of $\mathcal{O}_K$ is the
cardinality of the finite ring $\mathcal{O}_K/I$.

Now we recall Kim's results in \cite{Kim}. The first two are
about imaginary quadratic fields.

\begin{Prop}
Let $K=\mathbb{Q}(\sqrt{-m})$ be an imaginary quadratic field and
$\{\alpha_i+I_i\}_{i=1}^k$ an exact covering system
of $\mathcal{O}_K$ with $N(I_1)\le N(I_2)\le\dots\le N(I_k)$. If
$I_k$ is principal, then it must be repeated.
\end{Prop}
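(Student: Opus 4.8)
The plan is to mimic, inside the finite abelian group $R:=\mathcal{O}_K/I$ with $I=\bigcap_{i=1}^k I_i=\prod_{j=1}^l\mathfrak p_j^{r_j}$, the classical root-of-unity argument for $\mathbb{Z}$, replacing the primitive root of unity of the classical proof by a carefully chosen additive character of $R$. Since $I\subseteq I_i$ for every $i$, each class $\alpha_i+I_i$ is a union of cosets of $I$, so the exact covering of $\mathcal{O}_K$ descends to a partition of $R$ into the cosets $\alpha_i+I_i/I$. Hence $\sum_{i=1}^k\mathbf{1}_{\alpha_i+I_i/I}\equiv 1$ on $R$, and multiplying this identity by $\chi(x)$ and summing over $x\in R$, for a nontrivial character $\chi$ of the additive group $R$ (so that $\sum_{x\in R}\chi(x)=0$), yields
$$0=\sum_{i=1}^k\chi(\alpha_i)\sum_{y\in I_i/I}\chi(y),$$
where the inner sum equals $N(I)/N(I_i)$ when $\chi$ is trivial on the subgroup $I_i/I$ and vanishes otherwise.

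The crux is to produce a nontrivial $\chi$ that is trivial on $I_k/I$ but nontrivial on $I_i/I$ for every $i$ with $I_i\neq I_k$. Granting this, the displayed identity collapses to $\sum_{i\,:\,I_i=I_k}\chi(\alpha_i)=0$; if $I_k$ occurred only once this would say $\chi(\alpha_k)=0$, which is impossible since $\chi(\alpha_k)$ is a root of unity. Hence $I_k$ must be repeated.

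To build $\chi$, recall that the characters of $R$ trivial on $I_k/I$ are exactly those factoring through $\mathcal{O}_K/I_k$, and among them the ones also trivial on $I_i/I$ are those trivial on $(I_i+I_k)/I=\gcd(I_i,I_k)/I$. Since $N(I_k)$ is maximal, $I_i\subseteq I_k$ forces $I_i=I_k$, so for $I_i\neq I_k$ the ideal $\gcd(I_i,I_k)$ is a \emph{proper} divisor of $I_k$ and therefore divides $I_k\mathfrak p_j^{-1}$ for some prime $\mathfrak p_j\mid I_k$; consequently any character trivial on $\gcd(I_i,I_k)/I$ is trivial on $(I_k\mathfrak p_j^{-1})/I$. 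It thus suffices to exhibit a character of $\mathcal{O}_K/I_k$ that, for every prime $\mathfrak p_j\mid I_k$, is nontrivial on the image of $I_k\mathfrak p_j^{-1}$ in $\mathcal{O}_K/I_k$. By the Chinese Remainder Theorem $\mathcal{O}_K/I_k\cong\prod_{\mathfrak p_j\mid I_k}\mathcal{O}_K/\mathfrak p_j^{r_{k,j}}$, and under this isomorphism the image of $I_k\mathfrak p_j^{-1}$ is concentrated in the $j$-th factor, where it is the \emph{nonzero} ideal $\mathfrak p_j^{r_{k,j}-1}/\mathfrak p_j^{r_{k,j}}$. Choosing in each factor a character nontrivial on that ideal and taking the product produces the desired $\chi$ (in particular it is nontrivial).

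I expect the only genuine obstacle to be this last step — verifying that the "forbidden" characters do not cover all of $\widehat{\mathcal{O}_K/I_k}$; for a non-cyclic finite abelian group a finite union of proper subgroups can be the whole group, so the Chinese Remainder description really is needed here. It is worth remarking that this argument nowhere uses that $I_k$ is principal, which suggests the hypothesis can be dropped and foreshadows the more general theorems to come.
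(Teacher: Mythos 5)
Your argument is correct: the descent to $R=\mathcal{O}_K/I$, the orthogonality computation, and the construction of $\chi$ all check out, and the step you flag as the potential obstacle is indeed sound — for each $j$ one extends a nontrivial character of the nonzero subgroup $\mathfrak{p}_j^{r_{k,j}-1}/\mathfrak{p}_j^{r_{k,j}}$ to $\mathcal{O}_K/\mathfrak{p}_j^{r_{k,j}}$ and takes the product over the CRT factors, so the ``forbidden'' character sets are genuinely avoided even though $\mathcal{O}_K/I_k$ need not be cyclic. However, this is not the route the paper takes. The paper never proves Proposition 1 directly (it is quoted from Kim); instead it proves the stronger Theorem 1 by a purely combinatorial method in the style of Berger--Felzenbaum--Fraenkel: using CRT together with $\mathfrak{p}_j$-adic digit expansions (via uniformizers $t_j\in\mathfrak{p}_j\setminus\mathfrak{p}_j^2$) it maps the exact covering to a cell partition of a lattice parallelotope, shows that division maximal ideals correspond to subset minimal cells, and invokes the lemma that a subset minimal cell forces at least $\min\{n_j\mid \mathfrak{p}_j\mid I_i\}$ cells with the same index. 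That approach buys a quantitative multiplicity bound (and, in Theorem 2, the stronger bound $\min\{G(I_i/(I_i+I_j))\mid I_j\neq I_i\}$), whereas your character-sum (root-of-unity) argument is shorter and more self-contained but, as written, only yields that a vanishing sum of roots of unity has at least two terms, i.e.\ the qualitative conclusion ``repeated''. Your closing observation is also consistent with the paper: neither principality of $I_k$ nor the imaginary quadratic hypothesis is needed, exactly as the paper's Theorem 1 (applied to any ideal of maximal norm, which is automatically division maximal) confirms.
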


\begin{Prop}
Let $K=\mathbb{Q}(\sqrt{-m})$ be an imaginary quadratic field with
class number two and $m\neq15,\,35,\,91,\,187,\,403$. If
$\{\alpha_i+I_i\}_{i=1}^k$ is an exact covering
system of $\mathcal{O}_K$, then the moduli can not be distinct.
\end{Prop}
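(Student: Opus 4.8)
The plan is to combine Proposition~1 with a character analysis of the covering system. First I would record the two constraints that an exact covering system $\{\alpha_i\,\text{mod}\,I_i\}_{i=1}^k$ of $\mathcal{O}_K$ imposes. Put $N_i=N(I_i)$ and $I=\bigcap_{i=1}^k I_i$. Counting densities gives $\sum_{i=1}^k 1/N_i=1$. Writing each indicator function $\mathbf{1}_{\alpha_i+I_i}$ as the average $\frac{1}{N_i}\sum_{\psi}\psi(x-\alpha_i)$ of the additive characters $\psi$ of $\mathcal{O}_K/I$ that are trivial on $I_i$, summing over $i$, and matching the coefficient of each nontrivial character $\psi$ of $\mathcal{O}_K/I$, one obtains
$$\sum_{\substack{1\le i\le k\\ I_i\subseteq\ker\psi}}\frac{\psi(-\alpha_i)}{N_i}=0 ,$$
where $\ker\psi$ is a subgroup of $\mathcal{O}_K$ whose index equals the order of $\psi$.

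Now assume, for contradiction, that the moduli $I_1,\dots,I_k$ are pairwise distinct, and relabel so that $N_k=\max_i N_i$. If some maximal-norm modulus is principal, then after a further relabelling Proposition~1 applies and forces that modulus to be repeated, contradicting distinctness. Hence I may assume every maximal-norm modulus is non-principal; since $K$ has class number two, they all lie in the single non-trivial ideal class.

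The main step is to choose $\psi$ trivial on $I_k$ of the largest possible order, so as to isolate $I_k$ in the displayed identity. If $\mathcal{O}_K/I_k$ is cyclic there is a $\psi$ with $\ker\psi=I_k$ exactly; then $I_i\subseteq\ker\psi$ means $I_k\mid I_i$, which together with $N_i\le N_k$ forces $I_i=I_k$. The identity then becomes $\sum_{i:\,I_i=I_k}\psi(-\alpha_i)=0$, a vanishing sum of nonzero complex numbers, which is impossible unless $I_k$ occurs at least twice --- the desired contradiction. For a quadratic field $\mathcal{O}_K/I_k$ is cyclic precisely when $I_k$ is not divisible by any rational prime, so this disposes of all such $I_k$.

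What remains, and what I expect to be the real obstacle, is the case where the maximal modulus $I_k$ is non-principal and divisible by a rational prime: every $\psi$ trivial on $I_k$ then has kernel strictly larger than $I_k$, so additional moduli can also be killed by $\psi$ and a single character no longer suffices. Here I would run the whole family of characters of maximal order on $\mathcal{O}_K/I_k$ against the density identity $\sum 1/N_i=1$, and use the class-number-two hypothesis --- any two non-principal ideals differ by a principal fractional ideal, so tensoring by an invertible module turns the configuration around $I_k$ into an exact covering of a strictly smaller invertible $\mathcal{O}_K$-module, on which one can induct. The restriction $m\neq 15,\,35,\,91,\,187,\,403$ should be exactly what prevents this reduction from terminating at a genuine exact covering with distinct imprimitive moduli; pinning down that these five fields are the only obstructions --- presumably through a short case analysis of the small ideals in the non-trivial class, e.g.\ the presence of two conjugate non-principal primes of small norm --- is the delicate part of the argument.
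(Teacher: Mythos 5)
Your proposal is not a proof: it stalls exactly where the difficulty of Proposition~2 lives. The density identity, the character identity, the reduction via Proposition~1 when a maximal-norm modulus is principal, and the cyclic-quotient case (where a character of full order isolates $I_k$) are all fine, but together they only handle the easy configurations. The remaining case --- $I_k$ non-principal with $(p)\mid I_k$ for some rational prime $p$, so that every character trivial on $I_k$ has kernel strictly larger than $I_k$ and other moduli contaminate the identity --- is precisely the content of the statement, and for it you offer only a program (``run the whole family of characters of maximal order,'' ``tensor by an invertible module and induct,'' ``a short case analysis of the small ideals in the non-trivial class'') with no actual argument. In particular, the inductive step is not defined: an exact covering of a ``strictly smaller invertible $\mathcal{O}_K$-module'' is not an exact covering of $\mathcal{O}_K$, so nothing you have proved applies to it, and you give no mechanism by which the five exceptional values $m=15,35,91,187,403$ would emerge. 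As written, the crux is conjectured rather than proved.

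It is also worth comparing with how the paper disposes of this statement. The paper does not argue field-by-field at all: it proves (Theorem~1, via the Chinese Remainder Theorem, a bijection onto a lattice parallelotope $P(n;\bm{b})$, and the Berger--Felzenbaum--Fraenkel lemma on cell partitions) that in an exact covering system over \emph{any} number field, every division maximal modulus --- in particular every modulus of largest norm --- repeats at least $\min\{n_j\mid\mathfrak{p}_j\mid I_i\}\ge 2$ times. Distinct moduli are therefore impossible with no hypothesis on principality, on the class number, or on $m$; the restrictions in Proposition~2 are artifacts of Kim's quadratic-field method, not of the truth. If you want a character-theoretic proof in the spirit of your sketch, the non-cyclic, non-principal case must be confronted head on; alternatively, the combinatorial cell-partition route avoids it entirely.
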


Kim also got a result about real quadratic fields.

\begin{Prop}
Let $m$ be a positive integer. If
$\{\alpha_i+I_i\}_{i=1}^k$ is an exact covering
system of $\mathcal{O}_K$ with $K=\mathbb{Q}(\sqrt{m})$, where all
the moduli are principal, then any modulus of the largest norm
must be repeated.
\end{Prop}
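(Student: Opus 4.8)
The plan is to transplant M.\ Newman's classical argument \cite{M.newman} for $\mathbb{Z}$ to the additive group of a finite quotient ring, carried out one prime at a time. First I would reduce to a finite abelian group: since $I=\bigcap_{i=1}^{k}I_i$ is contained in every $I_i$, each class $\alpha_i\,\text{mod}\,I_i$ is a union of cosets of $I$, so the exact covering system descends to a partition of $G:=\mathcal{O}_K/I$ into cosets $\bar\alpha_i+H_i$, where $H_i:=I_i/I$ and $|H_i|=N(I)/N(I_i)$. By the Chinese Remainder Theorem $G\cong\prod_{j=1}^{l}\mathcal{O}_K/\mathfrak{p}_j^{r_j}$ and, compatibly, $H_i\cong\prod_{j=1}^{l}\mathfrak{p}_j^{r_{i,j}}/\mathfrak{p}_j^{r_j}$; in particular every character of $G$ factors as $\chi=\prod_{j}\chi_j$, with $\chi_j$ a character of $\mathcal{O}_K/\mathfrak{p}_j^{r_j}$.

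The engine of the proof is the identity obtained by summing a nontrivial character $\chi$ of $G$ over the partition: \[ 0=\sum_{g\in G}\chi(g)=\sum_{i=1}^{k}\sum_{g\in\bar\alpha_i+H_i}\chi(g)=\sum_{i\,:\,\chi|_{H_i}=1}\chi(\bar\alpha_i)\,|H_i|, \] since the inner sum vanishes unless $\chi$ is trivial on $H_i$ (note $\bar\alpha_i+H_i$ and $\chi(\bar\alpha_i)|H_i|$ depend only on the class $\alpha_i\,\text{mod}\,I_i$, so this is well posed). Fix an index $k$ with $N(I_k)$ largest. What I need is a single nontrivial $\chi$ whose surviving set $\{\,i:\chi|_{H_i}=1\,\}$ is exactly $\{\,i:I_i=I_k\,\}$: then the identity collapses to $|H_k|\sum_{i:I_i=I_k}\chi(\bar\alpha_i)=0$, and if $I_k$ occurred only once this would read $\chi(\bar\alpha_k)|H_k|=0$, which is false.

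To build $\chi$, observe that for each $j$ with $r_{k,j}\ge1$ the subgroups $\mathfrak{p}_j^{\,t}/\mathfrak{p}_j^{r_j}$ ($0\le t\le r_j$) form a descending chain, and since $\mathfrak{p}_j^{\,r_{k,j}-1}/\mathfrak{p}_j^{\,r_{k,j}}\cong\mathcal{O}_K/\mathfrak{p}_j$ is nonzero I can pick $\chi_j$ trivial on $\mathfrak{p}_j^{\,r_{k,j}}/\mathfrak{p}_j^{r_j}$ but nontrivial on $\mathfrak{p}_j^{\,r_{k,j}-1}/\mathfrak{p}_j^{r_j}$; for $j$ with $r_{k,j}=0$ take $\chi_j$ trivial. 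Comparing along the chain, $\chi_j$ is trivial on $\mathfrak{p}_j^{\,r_{i,j}}/\mathfrak{p}_j^{r_j}$ precisely when $r_{i,j}\ge r_{k,j}$, so $\chi=\prod_j\chi_j$ is trivial on $H_i$ iff $r_{i,j}\ge r_{k,j}$ for all $j$, i.e.\ iff $I_i\subseteq I_k$. Since $I_k\ne\mathcal{O}_K$ we have $r_{k,j}\ge1$ for some $j$, so $\chi$ is nontrivial. Finally $I_i\subseteq I_k$ forces $N(I_i)=[\mathcal{O}_K:I_i]\ge[\mathcal{O}_K:I_k]=N(I_k)$, hence $N(I_i)=N(I_k)$ by maximality and then $I_i=I_k$ — exactly the equality needed above. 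As the same recipe works for every index attaining the maximal norm, every modulus of largest norm is repeated.

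I expect the construction of $\chi$ to be the genuine obstacle: in Newman's argument one simply plugs in a primitive $n_k$-th root of unity, whose order automatically annihilates every smaller modulus, whereas over a number field the sublattices $I_i$ are unaligned and $G$ carries many subgroups lying outside the chains that matter, so the ``character detecting exactly the largest modulus'' must be assembled prime by prime, and maximality of $N(I_k)$ must be invoked to upgrade the containment $I_i\subseteq I_k$ to the equality $I_i=I_k$. It is worth noting that nothing above uses that $K$ is real quadratic or that the $I_i$ are principal, so those hypotheses of Proposition~3 are inessential — which is precisely why the statement extends to arbitrary number fields in the sections that follow.
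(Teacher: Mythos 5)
Your argument is correct, and it proves more than Proposition 3 (it handles any number field and arbitrary, not necessarily principal, moduli, yielding that every modulus of largest norm is repeated) — but it is a genuinely different route from the one the paper takes. The paper never reproves Kim's statement directly; it subsumes it in Theorem 1 via a purely combinatorial mechanism: using the Chinese Remainder Theorem and $\mathfrak{p}_j$-adic digit expansions with uniformizers $t_j\in\mathfrak{p}_j\setminus\mathfrak{p}_j^2$, each class $\alpha_i\,\mathrm{mod}\,I_i$ is mapped to a cell of a lattice parallelotope $P(n;\bm{b})$, division maximal ideals correspond to subset minimal cells, and the Berger--Felzenbaum--Fraenkel lemma on cell partitions then gives a quantitative bound: each division maximal $I_i$ (in particular each modulus of largest norm) repeats at least $\min\{n_j\mid \mathfrak{p}_j\mid I_i\}$ times, later improved in Theorem 2 to $\min\{G(I_i/(I_i+I_j))\mid I_j\neq I_i\}$. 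You instead run a Newman-style character sum on $G=\mathcal{O}_K/I$: the coset-partition identity $0=\sum_{i:\chi|_{H_i}=1}\chi(\bar\alpha_i)|H_i|$, together with a character assembled prime by prime that is trivial on $H_i$ exactly when $I_k\mid I_i$, plus the observation that $I_i\subseteq I_k$ and maximality of $N(I_k)$ force $I_i=I_k$; all steps check out (existence of $\chi_j$ trivial on $\mathfrak{p}_j^{r_{k,j}}/\mathfrak{p}_j^{r_j}$ but not on $\mathfrak{p}_j^{r_{k,j}-1}/\mathfrak{p}_j^{r_j}$ follows since characters of $\mathcal{O}_K/\mathfrak{p}_j^{r_{k,j}}$ separate points, and triviality on a product subgroup is componentwise). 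What each approach buys: yours is shorter, self-contained, and already exposes that the real-quadratic and principality hypotheses are inessential; the paper's parallelotope approach buys the sharper multiplicity counts, which your argument as written does not give — a vanishing sum of roots of unity only forces at least two terms (and even refining via Lam--Leung-type results on vanishing sums would naturally yield a bound tied to the rational primes below the $\mathfrak{p}_j$, not to the residue norms $n_j=N(\mathfrak{p}_j)$, so the combinatorial lemma is genuinely stronger here).
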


\section{Exact covering systems in number fields}

We first give the definition of a division maximal ideal.
\begin{Def}
Let $\mathcal{I}=\{I_i\mid\ 1\le i\le k\}$ be a collection of
$\mathcal{O}_K$-ideals. An ideal $I\in \mathcal{I}$ is called
division maximal if $J\in \mathcal{I}$ and $I|J$ imply $I=J$.
\end{Def}

In this section, we will prove the following result.

\begin{Thm}
Let $\{\alpha_i+I_i\}_{i=1}^k$ be an exact covering system
of $\mathcal{O}_K$ for a number field $K$. If $I_i$ is
division maximal, then $I_i$ must
be repeated at least $\min\{n_j\mid\ \mathfrak{p}_j|I_i\}$ times.
\end{Thm}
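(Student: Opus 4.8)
The plan is to pass to the finite ring $\mathcal O_K/I$ via the Chinese Remainder Theorem, turn "exact covering'' into an orthogonality relation among additive characters, and then use division maximality to isolate the modulus $I_{i_0}$; what is left over is a purely combinatorial/Fourier lemma about subsets of a product of finite abelian groups.

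\medskip
\noindent\emph{Reduction to a character relation.} Since $I\subseteq I_i$ for every $i$, each class $\alpha_i\,\mathrm{mod}\,I_i$ is a union of cosets of $I$, so the exact covering descends to a partition of $\mathcal O_K/I\cong\prod_j\mathcal O_K/\mathfrak p_j^{r_j}$ into the cosets $(\alpha_i+I_i)/I$. For a nontrivial additive character $\chi$ of $\mathcal O_K/I$ we sum $\chi$ over this partition: using $\sum_x\chi(x)=0$ and the fact that $\sum_{y\in I_i/I}\chi(y)$ equals $|I_i/I|$ or $0$ according as $\chi|_{I_i/I}$ is trivial or not, we get, for every nontrivial $\chi$,
$$\sum_{i:\ \chi|_{I_i/I}=1}\frac{\chi(\alpha_i)}{N(I_i)}=0 .$$
Writing $\chi=\bigotimes_j\chi_j$ with $\chi_j$ a character of $\mathcal O_K/\mathfrak p_j^{r_j}$, one has $\chi|_{I_i/I}=1$ iff $\chi_j$ is trivial on $\mathfrak p_j^{r_{i,j}}/\mathfrak p_j^{r_j}$ for every $j$.

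\medskip
\noindent\emph{Isolating $I_{i_0}$.} I would now restrict to characters $\chi$ that are of \emph{full depth at $I_{i_0}$}: for each $j$ with $\mathfrak p_j\mid I_{i_0}$ take $\chi_j$ trivial on $\mathfrak p_j^{r_{i_0,j}}/\mathfrak p_j^{r_j}$ but nontrivial on $\mathfrak p_j^{r_{i_0,j}-1}/\mathfrak p_j^{r_j}$, and $\chi_j=1$ otherwise. For such $\chi$ the condition $\chi|_{I_i/I}=1$ is equivalent to $r_{i,j}\ge r_{i_0,j}$ for all $j$, i.e.\ to $I_i\subseteq I_{i_0}$, i.e.\ to $I_{i_0}\mid I_i$; and here division maximality of $I_{i_0}$ enters, forcing $I_i=I_{i_0}$. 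Hence, with $T:=\{i:\ I_i=I_{i_0}\}$ (nonempty, as $i_0\in T$), the relation above gives $\sum_{i\in T}\chi(\alpha_i)=0$ for every full-depth $\chi$. Such $\chi$ factors through $G:=\mathcal O_K/I_{i_0}\cong\prod_{\mathfrak p_j\mid I_{i_0}}\mathcal O_K/\mathfrak p_j^{r_{i_0,j}}$, and exactness forces the residues $\alpha_i\bmod I_{i_0}$ ($i\in T$) to be pairwise distinct (equal classes would put a point in two members of the covering). So the indicator function of $E:=\{\alpha_i\bmod I_{i_0}:\ i\in T\}\subseteq G$ has vanishing Fourier coefficient at every character of $G$ whose $j$-th component is nontrivial on the order-$n_j$ subgroup $H_j:=\mathfrak p_j^{r_{i_0,j}-1}/\mathfrak p_j^{r_{i_0,j}}$.

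\medskip
\noindent\emph{The combinatorial lemma.} The theorem reduces to: if $G=\prod_{j\in S}G_j$ is a finite abelian group, $H_j\le G_j$ with $|H_j|=n_j\ge2$, and $\phi\colon G\to\mathbb C$ is nonzero with $\widehat\phi(\chi)=0$ whenever $\chi_j|_{H_j}\ne1$ for all $j\in S$, then $|\mathrm{supp}\,\phi|\ge\min_{j\in S}n_j$ (apply it to $\phi=\mathbf 1_E$, $S=\{j:\mathfrak p_j\mid I_{i_0}\}$, to conclude $|T|\ge\min\{n_j:\mathfrak p_j\mid I_{i_0}\}$). I would prove the lemma by induction on $|S|$. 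If $|S|=1$, the hypothesis forces $\widehat\phi$ to be supported on $H_1^{\perp}$, so $\phi$ is constant on cosets of $H_1$ and $\mathrm{supp}\,\phi$ is a nonempty union of such cosets, of size $\ge n_1$. If $|S|\ge2$, pick $j_0$ realizing $\min_{j\in S}n_j$, write elements of $G$ as $(x',x_{j_0})$ with $x'\in G':=\prod_{j\ne j_0}G_j$, and consider the slices $\phi_{x_{j_0}}:=\phi(\,\cdot\,,x_{j_0})$ on $G'$. The hypothesis shows that for every "admissible'' character $\eta$ of $G'$ (one with $\eta_j|_{H_j}\ne1$ for all $j\ne j_0$) the map $x_{j_0}\mapsto\widehat{\phi_{x_{j_0}}}(\eta)$ is $H_{j_0}$-invariant; hence $\phi_{x_{j_0}+h}-\phi_{x_{j_0}}$ has vanishing Fourier coefficient at every admissible $\eta$, for all $h\in H_{j_0}$. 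If $\phi$ is itself $H_{j_0}$-invariant, then $\mathrm{supp}\,\phi$ is a union of $H_{j_0}$-cosets and $|\mathrm{supp}\,\phi|\ge n_{j_0}=\min_{j\in S}n_j$. Otherwise some difference $\psi:=\phi_{x_{j_0}+h}-\phi_{x_{j_0}}$ is a nonzero function on $G'$ satisfying the inductive hypothesis, so $|\mathrm{supp}\,\psi|\ge\min_{j\ne j_0}n_j\ge\min_{j\in S}n_j$; since $h\ne0$, the sets $\mathrm{supp}\,\phi_{x_{j_0}}$ and $\mathrm{supp}\,\phi_{x_{j_0}+h}$ sit in distinct slices of $G$ and their union contains $\mathrm{supp}\,\psi$, giving $|\mathrm{supp}\,\phi|\ge|\mathrm{supp}\,\phi_{x_{j_0}}|+|\mathrm{supp}\,\phi_{x_{j_0}+h}|\ge|\mathrm{supp}\,\psi|\ge\min_{j\in S}n_j$.

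\medskip
\noindent The main obstacle is this last lemma: converting "all full-depth Fourier coefficients of $\mathbf 1_E$ vanish'' into a genuine lower bound on $|T|$. Substituting a single point into the Fourier-inversion/inclusion--exclusion identity only yields $|T|\ge2$; it is the layer-by-layer support argument above, pivoting on the prime of smallest norm and controlling the non-$H_{j_0}$-invariance of $\phi$ slice by slice, that produces the sharp constant $\min\{n_j:\mathfrak p_j\mid I_{i_0}\}$. The only other delicate point is the verification---for the full-depth characters chosen above---that the summation index collapses to exactly $T$, which is precisely where the division-maximality hypothesis is used.
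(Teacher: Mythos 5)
Your argument is correct, and it takes a genuinely different route from the paper. The paper also begins with the Chinese Remainder Theorem, but then fixes uniformizers $t_j\in\mathfrak p_j\setminus\mathfrak p_j^2$ and digit representatives to encode each class $\alpha_i\,\mathrm{mod}\,I_i$ as a cell of a lattice parallelotope $P(n;\bm b)$, shows that division-maximal ideals correspond to subset-minimal cells, and then concludes by quoting Lemma~1 of Berger--Felzenbaum--Fraenkel (the cell-partition lemma) as a black box. You instead stay inside the ring $\mathcal O_K/I$, derive the relation $\sum_{i:\chi|_{I_i/I}=1}\chi(\alpha_i)/N(I_i)=0$ from orthogonality of characters, and use the full-depth characters at $I_{i_0}$ to collapse the index set to $T=\{i: I_i=I_{i_0}\}$ --- exactly where division maximality enters, and this step checks out (triviality of $\chi_j$ on $\mathfrak p_j^{r_{i,j}}$ is equivalent to $r_{i,j}\ge r_{i_0,j}$ because the subgroups form a chain, and exactness makes the residues $\alpha_i\bmod I_{i_0}$, $i\in T$, distinct). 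Your inductive Fourier-support lemma is also sound: the identity $\widehat{F_\eta}(\psi)=\widehat\phi(\eta\otimes\psi)$ justifies the $H_{j_0}$-invariance of the slice transforms, the $H_{j_0}$-invariant case gives $|\mathrm{supp}\,\phi|\ge n_{j_0}=\min_j n_j$ (this is the only place the choice of $j_0$ as the argmin is needed), and the non-invariant case feeds the induction and the two-slice counting correctly. In effect you have given a self-contained, character-theoretic proof of the role played by the BFF lemma, avoiding the choice of uniformizers and digit expansions entirely; what you lose relative to the paper's route is only the economy of citing the known combinatorial lemma, and the parallelotope picture that the paper reuses in its Section~4 refinement.
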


It is obvious that the above theorem covers Kim's Proposition 2.
As an ideal with largest norm must be a division maximal ideal, our result
covers Propositions 1 and 3. Moreover we give the least repeated times and not just claim that there are repeated ideals.
We don't need any restriction of the number field  or the modulus ideals. This result holds for any number field.

To prove Theorem 1, we first recall the important concept
of parallelotope from \cite{nonanalytic}.

For $\bm{b} =(b_1, b_2,\dots, b_n) \in \mathbb{Z}^n$ with $b_i\ge
2,\ 1 \le i \le n$, define the lattice parallelotope or simply
parallelotope
\begin{align*}
P=P(n; \bm{b})&= \{\bm{c}=(c_1, c_2,\dots, c_n) \in
\mathbb{Z}^n\mid\ 0
\le c_i < b_i\ for\ 1 \le i \le n\}\\
&= B_1 \times B_2 \times\dots\times B_n,
\end{align*}
where $B_i = \{0, 1,\dots, b_i-1\}$ for $1 \le i \le n$. If
$b_1=b_2=\dots=b_n=b$, then $P(n;\bm{b})$ is also called the cube
$U(n;b)$.

\begin{Def}
Given a parallelotope $P=P(n; \bm{b})$, let $I \subseteq \{1,2,
\dots,n\}$. An $I$-cell or simply cell $C$ of $P$ is a set of the
form
\begin{align*}
C &= \{\bm{s}=(s_1,s_2, \dots, s_n) \in \mathbb{Z}^n\mid\ 0
\le s_i < b_i\ for \ i \in I, s_i=u_i\ for\ i \notin I\}\\
&= D_1 \times D_2 \times\dots\times D_n,
\end{align*}
where $D_i=\{0,1,\dots, b_i-1\}$ for $i \in I$, $D_i=\{u_i\}$ for $i
\notin I$. Here $\bm{u}=(u_1,u_2,\dots,u_n)$ is an arbitrary point in $P$. The set $I$ is called the index of $C$, and denoted by
$I=I(C)$.
\end{Def}
\begin{Def}
A partition $\tau$ of a parallelotope $P$ into cells is called a
cell partition of $P$. A cell $C \in \tau$ is said to be subset
minimal if $C' \in \tau$ and $I(C') \subseteq I(C)$ imply
$I(C')=I(C)$.
\end{Def}
We also need the following lemma from \cite{nonanalytic}.

\begin{Lem}
Let $\tau$ be a cell partition of $P(n; \bm{b})$ into at least two
cells and let $E \in \tau$ be subset minimal. Put $b=\min\{b_i\
\mid\ i\notin I(E)\}$. Then $\tau$ contains at least $b$
$I(E)$-cells.
\end{Lem}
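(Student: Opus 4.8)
The plan is to argue by induction on $n$. The base case $n=1$ is immediate: a cell partition of a one–dimensional parallelotope into at least two cells must be the collection of its $b_1$ singletons, so $I(E)=\emptyset$ and there are $b_1\ge b$ cells of index $\emptyset$. For the inductive step I would slice. Fix a coordinate $j$ and write $P=\bigcup_{t\in B_j}S_t$ with $S_t=\{\bm s\in P:s_j=t\}$; each $S_t$ is a parallelotope of dimension $n-1$ whose set of free directions is $\{1,\dots,n\}\setminus\{j\}$. For each $t$ the trace $\tau_t=\{K\cap S_t:K\in\tau,\ K\cap S_t\neq\emptyset\}$ is a cell partition of $S_t$, the index of $K\cap S_t$ inside $S_t$ equals $I(K)\setminus\{j\}$, and $K\mapsto K\cap S_t$ is injective on the cells of $\tau$ meeting $S_t$ because distinct cells of $\tau$ are disjoint.

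First I would treat the main case $I(E)\neq\emptyset$ by slicing in a direction $j\in I(E)$. Then $E$ meets every $S_t$ and $E\cap S_t$ has index $I(E)\setminus\{j\}$. This cell is subset minimal in $\tau_t$: if $F=K\cap S_t\in\tau_t$ has $I(K)\setminus\{j\}\subseteq I(E)\setminus\{j\}$, then (using $j\in I(E)$) $I(K)\subseteq I(E)$, so subset minimality of $E$ in $\tau$ forces $I(K)=I(E)$, hence $I(F)=I(E)\setminus\{j\}$. Also $\tau_t$ has at least two cells (otherwise $S_t=E\cap S_t$, forcing $E=P$ and $\tau=\{P\}$), so the induction hypothesis applies and yields at least $b':=\min\{b_i:i\in(\{1,\dots,n\}\setminus\{j\})\setminus I(E)\}$ cells of $\tau_t$ of index $I(E)\setminus\{j\}$; since $j\in I(E)$ one has $b'=\min\{b_i:i\notin I(E)\}=b$. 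Finally, if $F=K\cap S_t$ is such a cell then $I(K)\setminus\{j\}=I(E)\setminus\{j\}$, so $I(K)$ is either $I(E)$ or $I(E)\setminus\{j\}$; the latter is a proper subset of $I(E)$ and is forbidden by subset minimality of $E$, so $I(K)=I(E)$. By injectivity of $K\mapsto K\cap S_t$ we recover at least $b$ distinct cells $K\in\tau$ with $I(K)=I(E)$, as required. Note this argument needs nothing beyond the existence of a minimizing direction inside $I(E)$, so it covers every $E$ with $I(E)\neq\emptyset$, including $|I(E)|=1$.

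It remains to handle $I(E)=\emptyset$, i.e. $E=\{\bm u\}$ a point cell; here I would slice in a direction $m$ with $b_m=b=\min_i b_i$, and may assume $u_m=0$. A point cell is automatically subset minimal, so the induction hypothesis gives $\tau_0$ at least $b':=\min\{b_i:i\neq m\}\ (\ge b)$ point cells. The obstruction is that a point cell of $\tau_t$ need not come from a point cell of $\tau$: it may be $K\cap S_t$ for a cell $K\in\tau$ with $I(K)=\{m\}$ (a segment in the sliced direction), and every such $K$ contributes one spurious point cell to every slice. Let $q$ be the number of index-$\{m\}$ cells of $\tau$ and $a_t$ the number of genuine point cells of $\tau$ in $S_t$. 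If $q=0$ we are done, since $a_0\ge b'\ge b$. If $q\ge1$, every $\tau_t$ contains a point cell and hence at least $b'$ of them, so $a_t+q\ge b'$ for each of the $b$ slices; summing, $\sum_t a_t\ge b(b'-q)$, which is $\ge b$ once $q<b'$.

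The delicate point, and what I expect to be the technical heart of the proof, is the residual subcase $I(E)=\emptyset$ with $q\ge b'$. Here I would iterate the slicing in other directions: the index-$\{m\}$ cells are segments in direction $m$, so slicing in any $i\neq m$ distributes them among the $b_i$ slices and forces each such slice to contain a point cell; running the same count in direction $i$, with $q_i$ the number of index-$\{i\}$ cells, shows $\tau$ has at least $b$ point cells unless $q_i\ge\min\{b_j:j\neq i\}$. One is thus reduced to showing that a cell partition possessing a point cell cannot have $q_i\ge\min\{b_j:j\neq i\}$ for every coordinate $i$. Since the index-singleton cells of $\tau$ are pairwise disjoint, this is a packing constraint in $P$; I would close it by a counting/alignment argument (for $n=2$ it is immediate because an index-$\{1\}$ cell and an index-$\{2\}$ cell must meet, and in general too many disjoint index-$\{i\}$ cells leave no room for a point cell), thereby completing the induction. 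Getting this point-cell bookkeeping organized cleanly is the part I expect to require the most care.
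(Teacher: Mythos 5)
First, note that the paper does not prove this lemma at all: it is quoted verbatim from \cite{nonanalytic} (Berger--Felzenbaum--Fraenkel), so there is no in-paper proof to compare with; your attempt has to stand on its own. The slicing induction you set up does handle the case $I(E)\neq\emptyset$ correctly (the trace $\tau_t$ is a cell partition of the slice, $E\cap S_t$ is subset minimal in it, $b'=b$, and the lift back to $\tau$ is injective). But the case $I(E)=\emptyset$ is exactly where the content of the lemma lies, and there your argument is incomplete: you reduce to the subcase $q_i\ge\min\{b_j\mid j\neq i\}$ for every $i$ and then only express the hope that a ``counting/alignment argument'' will show this configuration is incompatible with the existence of a point cell. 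No such argument is given beyond $n=2$. Moreover, this case cannot be dismissed as peripheral: since removing $j$ from $I(E)$ can empty the index, the empty-index case in dimension $n-1$ is invoked inside your own inductive step for $I(E)\neq\emptyset$, so the whole induction rests on it.

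Worse, the statement you propose to prove in that residual subcase is false, so the plan cannot be closed along the announced route. Take $n=3$, $b_1=b_2=b_3=4$, i.e. the cube $U(3;4)$. Put four index-$\{1\}$ cells at $(y,z)\in\{0,1\}\times\{0,1\}$, four index-$\{2\}$ cells at $(x,z)\in\{0,1\}\times\{2,3\}$, and four index-$\{3\}$ cells at $(x,y)\in\{2,3\}\times\{2,3\}$. These twelve lines are pairwise disjoint (their coordinate projections are disjoint in the relevant coordinate), they cover $48$ of the $64$ points, and the remaining $16$ points (for instance $(2,0,2)$) can be taken as point cells. This is a legitimate cell partition containing a point cell in which $q_i=4=\min\{b_j\mid j\neq i\}$ for every $i$; your claimed impossibility fails, and your counting bound $\sum_t a_t\ge b_i(b_i'-q_i)$ becomes vacuous there. (The lemma itself is not threatened --- this partition has $16\ge b=4$ point cells --- which shows the needed argument must count point cells directly rather than rule out the configuration.) So the proposal has a genuine gap at the heart of the lemma; the two-dimensional intuition that lines in different directions must intersect simply does not persist in higher dimension, and a different idea, such as the one in \cite{nonanalytic}, is required for the point-cell case.
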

Now we consider exact covering systems in number fields. For an exact covering system
$\{\alpha_i+I_i\}_{i=1}^k$, let $I=\cap_{i=1}^kI_i$ with prime ideal decomposition $I=\prod_{j=1}^l\mathfrak{p}_j^{r_j}$. The main
idea is giving a map that sends each residue class $\alpha+I$ to a point
of certain parallelotope. This map sends $\alpha_i+I_i$ to a cell of the parallelotope, and a
division maximal ideal corresponds to a subset minimal cell. In Berger et al.'s article,
they used the fact that $\mathbb{Z}/n\mathbb{Z}$ is cyclic and
defined an addition operation on the corresponding parallelotope.
But for an ideal $I$, $\mathcal{O}_K/I$ is not always cyclic.
Fortunately, we can use the Chinese remainder theorem to get what we
need.

Let $B_j=\{\beta_{j,1},\dots,\beta_{j,n_j}\}$ be a complete set of
representatives of $\mathcal{O}_K$ modulo $\mathfrak{p}_j$. We
can define a bijective map $f_j$ by
\begin{eqnarray*}
f_j: B_j&\rightarrow&\{0,1,\dots,n_j-1\}\\
\beta_{j,i}&\mapsto& i-1.
\end{eqnarray*}
Fix some $t_j \in \mathfrak{p}_j\backslash \mathfrak{p}_j^2$,
then for any positive integer $r$,
$$B_j^r:=\{\sum_{k=0}^{r-1}\gamma_k\,t_j^k\mid\ \gamma_k \in
B_j\}$$ is a complete set of representatives of
$\mathcal{O}_K$ modulo $\mathfrak{p}_j^r$. This is easy to prove by using valuations as follows. If $\alpha,\beta\in B_j^r$ with $\alpha=\sum_{k=0}^{r-1}a_k\,t_j^k$,  $\beta=\sum_{k=0}^{r-1}b_k\,t_j^k$ and $\alpha\ne \beta$, then the valuation $v_{\mathfrak{p}_j}(\alpha-\beta)=\min\{i\ |\ a_i\ne b_i\}$ is less than $r$. But if $\alpha\equiv \beta\mod \mathfrak{p}_j^r$, we have $v_{\mathfrak{p}_j}(\alpha-\beta)\ge r$. So we have $\alpha\not\equiv \beta\mod \mathfrak{p}_j^r$ if $\alpha\ne\beta$. $B_j^r$ has $n_j^r=N(\mathfrak{p}_j^r)$ elements so it is a complete set of representatives. For the definition and properties of valuations see \cite{Flo}.
We naturally extend
$f_j$ to a map
\begin{eqnarray*}
f_j^r: B_j^r &\rightarrow&
U(r;n_j)\\
\sum_{k=0}^{r-1}\gamma_k\,t_j^k &\mapsto&
(f_j(\gamma_0),\dots,f_j(\gamma_{r-1})).
\end{eqnarray*}
Of course $f_j^r$ is also a bijective map.

Now we can define a map $f$ that sends each $\alpha+I$ to a
point in $P(n; \bm{b})$, here $n=\sum_{j=1}^lr_j$ and
$\bm{b}=(\underbrace{n_1,\dots,n_1}_{r_1},\underbrace{n_2,\dots,n_2}_{r_2},
\dots,\underbrace{n_l,\dots,n_l}_{r_l})$, i.e.$$P(n;
\bm{b})=\underbrace {f_1(B_1)\times\dots\times
f_1(B_1)}_{r_1}\times\dots\times\underbrace{f_l(B_l)\times\dots\times
f_l(B_l)}_{r_l}.$$
Assume for each $1\le j \le l$,
$$\alpha \equiv \sum_{k=0}^{r_j-1}\gamma_{j,k}\,t_j^k \mod
\mathfrak{p}_j^{r_j},$$ then define
$$f(\alpha+I)=(f_1^{r_1}(\sum_{k=0}^{r_1-1}\gamma_{1,k}\,t_1^k),\dots,
f_l^{r_l}(\sum_{k=0}^{r_l-1}\gamma_{l,k}\,t_l^k)).$$

\begin{Lem}
The map $f: \mathcal{O}_K/I\longrightarrow P(n;\bm{b})$ is
bijective. For all $\alpha_i \in \mathcal{O}_K$, we have that $f(\alpha_i)$ is a cell of $P(n;\bm{b})$ and division maximal ideals correspond to
subset minimal cells. Moreover if
$\{\alpha_i+I_i\}_{i=1}^k$ is an exact covering system of
$\mathcal{O}_K$, then $\{f(\alpha_i+I_i)\mid\
i=1,2,\dots, k\}$ is a cell partition of $P(n;\bm{b})$.
\end{Lem}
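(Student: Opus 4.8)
The plan is to verify each assertion of Lemma~2 in turn, using the Chinese Remainder Theorem and the explicit maps $f_j^{r_j}$ constructed above. First I would establish that $f$ is bijective. Since $I=\prod_{j=1}^l\mathfrak{p}_j^{r_j}$ and the $\mathfrak{p}_j^{r_j}$ are pairwise coprime, CRT gives a ring isomorphism $\mathcal{O}_K/I\cong\prod_{j=1}^l\mathcal{O}_K/\mathfrak{p}_j^{r_j}$. Composing this with the product of the bijections $f_j^{r_j}:B_j^{r_j}\to U(r_j;n_j)$ (each of which is a bijection because $B_j^{r_j}$ is a complete set of representatives mod $\mathfrak{p}_j^{r_j}$, a fact asserted in the text and itself provable by induction on $r_j$ using $t_j\in\mathfrak{p}_j\setminus\mathfrak{p}_j^2$) yields exactly the map $f$, which is therefore bijective onto $P(n;\bm{b})=\prod_j U(r_j;n_j)$.

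Next I would show $f(\alpha_i\,\mathrm{mod}\,I_i)$ is a cell. The point is to track, coordinate-block by coordinate-block, the effect of enlarging the modulus from $I$ to $I_i=\prod_j\mathfrak{p}_j^{r_{i,j}}$ with $0\le r_{i,j}\le r_j$. Writing $\alpha_i\,\mathrm{mod}\,I_i$ as the disjoint union of the classes $\alpha\,\mathrm{mod}\,I$ with $\alpha\equiv\alpha_i\pmod{I_i}$, an element $\alpha$ lies in this union iff for each $j$ its expansion $\sum_{k=0}^{r_j-1}\gamma_{j,k}t_j^k$ agrees with that of $\alpha_i$ in the first $r_{i,j}$ digits $\gamma_{j,0},\dots,\gamma_{j,r_{i,j}-1}$, while the remaining digits $\gamma_{j,r_{i,j}},\dots,\gamma_{j,r_j-1}$ are free. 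Under $f$ this says precisely that the coordinates indexed by the first $r_{i,j}$ slots of block $j$ are fixed to prescribed values $u$ and the rest range over their full ranges $\{0,\dots,n_j-1\}$ — i.e.\ a cell whose index set $I(K)$ consists of the ``free'' coordinates. This identification also makes the correspondence between ideals and index sets transparent: $I_i\mid I_{i'}$ iff $r_{i,j}\le r_{i',j}$ for all $j$ iff the free-coordinate set of $f(\alpha_{i'}\,\mathrm{mod}\,I_{i'})$ is contained in that of $f(\alpha_i\,\mathrm{mod}\,I_i)$, so division maximal ideals (minimal under divisibility among the $I_i$, hence maximal number of free coordinates... careful: $I\mid J$ means $I$ is the ``bigger'' ideal in the divisibility lattice but $\mathcal{O}_K/J$ is larger) translate to cells whose index set is not properly contained in any other, namely subset minimal cells.

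Finally, since $f$ is a bijection from $\mathcal{O}_K/I$ onto $P(n;\bm{b})$ and $\{\alpha_i\,\mathrm{mod}\,I_i\}_{i=1}^k$ partitions $\mathcal{O}_K$ — equivalently, the images $\overline{\alpha_i\,\mathrm{mod}\,I_i}$ partition $\mathcal{O}_K/I$ (each class mod $I_i$ being a union of classes mod $I$, and the covering being exact) — the images $f(\alpha_i\,\mathrm{mod}\,I_i)$ are cells that partition $P(n;\bm{b})$, which is the definition of a cell partition.

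The main obstacle I expect is the bookkeeping in the second step: carefully matching the digit-truncation description of $\alpha_i\,\mathrm{mod}\,I_i$ with the cell structure, getting the direction of the divisibility/inclusion correspondence right (``division maximal'' $I_i$ means $I_i$ has no proper multiple among the $I_j$, which corresponds to the index set $I(K)$ being minimal, not maximal), and confirming that the value of $t_j$ and the chosen representatives $B_j$ do not affect the conclusion. Verifying that $B_j^{r}$ really is a complete residue system mod $\mathfrak{p}_j^{r}$ — the one genuinely number-theoretic input — is a short induction: multiplication by $t_j$ induces an isomorphism $\mathcal{O}_K/\mathfrak{p}_j^{r-1}\xrightarrow{\sim}\mathfrak{p}_j/\mathfrak{p}_j^{r}$, so every class mod $\mathfrak{p}_j^{r}$ is uniquely $\gamma_0+t_j(\text{something mod }\mathfrak{p}_j^{r-1})$ with $\gamma_0\in B_j$, and one unwinds the recursion.
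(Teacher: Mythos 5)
Your proposal is correct and follows essentially the same route as the paper: CRT gives bijectivity of $f$, the digit expansion in powers of $t_j$ shows each $f(\alpha_i\,\text{mod}\,I_i)$ is a cell whose free coordinates are exactly the last $r_j-r_{i,j}$ slots of each block, the equivalence $I_i\mid I_t \Leftrightarrow I(f(\alpha_i\,\text{mod}\,I_i))\supseteq I(f(\alpha_t\,\text{mod}\,I_t))$ yields division maximal $\Leftrightarrow$ subset minimal, and the partition claim follows at once from bijectivity. One small caution: your phrase ``cells whose index set is not properly contained in any other'' states the containment in the wrong direction (subset minimal means no other cell's index is properly contained in \emph{its} index), but your closing paragraph states the correspondence correctly, so the substance of the argument is fine.
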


\begin{proof}
By the Chinese remainder theorem, each $\alpha+I$
corresponds one-to-one to, for $1\le j\le l$, $$\alpha \equiv
\sum_{k=0}^{r_j-1}\gamma_{j,k}\,t_j^k \mod
\mathfrak{p}_j^{r_j}\mbox{ with }\sum_{k=0}^{r_j-1}\gamma_{j,k}\,t_j^k
\in B_j^{r_j}.$$ Hence $f$ is well-defined. If $f(\alpha+I)=f(\beta+I)$, then for all $1\le j\le l$,
$$\alpha\equiv\beta\mod\mathfrak{p}_j^{r_j},$$ so $\alpha\equiv\beta\mod I$. Thus $f$ is injective. This is a map between finite sets with the same cardinality, so $f$ is also surjective. We now prove
$f(\alpha_i+I_i)$ is a cell of $P(n;\bm{b})$. Without loss of generality,
we prove it for the case $i=1$. According to the definition of $B_j^r$,
$$S_j=\{\sum_{k=r_{1,j}}^{r_j-1}\gamma_k\,t_j^k|\ \gamma_k \in
B_j\}$$ is a complete set of representatives of
$\mathfrak{p}_j^{r_{1,j}}$ modulo $\mathfrak{p}_j^{r_j}$, for $1\le j
\le l$. If $$\alpha_1 \equiv
\sum_{k=0}^{r_{1,j}-1}a_{j,k}\,t_j^k\mod
\mathfrak{p}_j^{r_{1,j}}\mbox{ with } a_{j,k}\in B_j,$$
then $\alpha_1+I_1$ is just the union of all the residue classes $\beta+I $ with
$$\beta \equiv \sum_{k=0}^{r_{1,j}-1}a_{j,k}\,t_j^k+s_j \mod
\mathfrak{p}_j^{r_j}\mbox{ with } s_j \in S_j\mbox{ and } 1\leq j\leq l.$$ So
$f(\alpha_1+I_1)=C_{1,0}\times\dots\times
C_{1,r_1-1}\times\dots\times C_{l,0}\times\dots\times
C_{l,r_l-1}$, where
\begin{align*}
C_{j,k}&=\{f_j(a_{j,k})\}\qquad\text{for}\ 0\le k \le r_{1,j}-1,\\
C_{j,k}&=f_j(B_j)\hspace{13mm}\text{for}\ r_{1,j}\le k \le r_j-1.
\end{align*}
Therefore $f(\alpha_1+I_1)$ is a cell of $P(n;\bm{b})$.

According to the above discussion, for
$I_i=\prod_{j=1}^l\mathfrak{p}_j^{r_{i,j}}$,
$f(\alpha_i+I_i)$ has index
$$I(f(\alpha_i+I_i))=\cup_{j=1}^l(\{r_{i,j}+1,\dots,r_j\}+\sum_{k=1}^{j-1}r_k).$$
Here for a set $S$, $S+n=\{s+n\ |\ s\in S\}$. As
\begin{align*}
I_i|I_t&\Leftrightarrow r_{i,j}\le r_{t,j},\ 1\le j\le l \\
&\Leftrightarrow\cup_{j=1}^l(\{r_{i,j}+1,\dots,r_j\}+\sum_{k=1}^{j-1}r_k)
\supseteq \cup_{j=1}^l(\{r_{t,j}+1,\dots,r_j\}+\sum_{k=1}^{j-1}r_k)\\
&\Leftrightarrow I(f(\alpha_i+I_i))\supseteq
I(f(\alpha_t+I_t)),
\end{align*}
$I_i$ is division maximal if and only if
$f(\alpha_i+I_i)$ is subset minimal. The last claim is obvious.
\end{proof}

Now we can prove Theorem 1.

\textit{Proof of Theorem 1}.\quad By Lemma 2, $\{f(\alpha_i+I_i)\ |\ 1\le i\le k\}$ is a cell partition of $P(n;\bm{b})$. For a division maximal ideal $I_i$,
$f(\alpha_i+I_i)$ is subset minimal. By Lemma 1 there are at least $$\min\{b_t\ |\ t \notin I(f(\alpha_i+I_i))\}$$ cells with index $I(f(\alpha_i+I_i))$. According to
Lemma 2, for $I_i=\prod_{j=1}^l\mathfrak{p}_j^{r_{i,j}}$, the index of $f(\alpha_i+I_i)$ is $$\cup_{j=1}^l(\{r_{i,j}+1,\dots,r_j\}+\sum_{k=1}^{j-1}r_k).$$
Two cells with the same index correspond to the same ideal. $I_i$ must be repeated at least
$$\min\{b_t\ |\ t \notin I(f(\alpha_i+I_i))\}$$ times. We have
\begin{align*}
t \notin I(f(\alpha_i+I_i))&\Longleftrightarrow t\notin \cup_{j=1}^l(\{r_{i,j}+1,\dots,r_j\}+\sum_{k=1}^{j-1}r_k)\\
&\Longleftrightarrow t \in \cup_{j=1}^l(\{1,\dots,r_{i,j}\}+\sum_{k=1}^{j-1}r_k).
\end{align*}
If $r_{i,j}=0$, then
$\{1,\dots,r_{i,j}\}+\sum_{k=1}^{j-1}r_k=\emptyset$. As $r_{i,j}\ge 1$ if and only if $\mathfrak{p}_j|I_i$, then $I_i$
must be repeated at least $\min\{n_j\mid\ \mathfrak{p}_j|I_i\}$ times. The proof is complete. \qed

\section{An improvement of the above result}

In this section, we'll prove an improvement of the above result.
\begin{Thm}
If $\{\alpha_i+I_i\}_{i=1}^k$ is an exact covering
system of $\mathcal{O}_K$ with not all the ideals the same, then
each $I_i$ must be repeated at least
$$\min\{G(\frac{I_i}{I_i+I_j})|\ \ {I_j\ne
I_i}\}$$ times, where for an ideal $I$, $G(I)=
\max\{N(\mathfrak{p}^r)\mid\ \mathfrak{p}^r|I,\ \mathfrak{p}\
\text{prime ideal},r\ge 0\}$. Here $\frac{I_i}{I_i+I_j}$ is a division in the group of fractional ideals, not some kind of quotient.
\end{Thm}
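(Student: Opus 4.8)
The plan is to imitate the proof of Theorem 1, feeding the cell partition produced by Lemma 2 into the sharper parallelotope lemma of \cite{improvements} in place of Lemma 1. So I would first push the exact covering system forward, through the bijection $f$ of Lemma 2, to a cell partition $\tau=\{C_i\}_{i=1}^k$ of $P(n;\bm b)$. Recall from the proof of Lemma 2 that the coordinate set of $P(n;\bm b)$ falls into $l$ consecutive blocks $B_1,\dots,B_l$, where $B_j$ collects the $r_j$ coordinates attached to $\mathfrak p_j$ (all of common bound $n_j=N(\mathfrak p_j)$), and that $C_i=f(\alpha_i\,\text{mod}\,I_i)$ is fixed on the first $r_{i,j}$ and free on the last $r_j-r_{i,j}$ coordinates of each $B_j$. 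In particular the free-coordinate set of $C_i$ meets $B_j$ in a final segment, so within each block the free sets of the cells are nested; and $I(C_i)=I(C_t)\iff r_{i,j}=r_{t,j}$ for all $j\iff I_i=I_t$. Hence the number of repetitions of $I_i$ in the system equals the number of cells of $\tau$ of index $I(C_i)$, and the hypothesis that not all the ideals coincide says exactly that the cells of $\tau$ are not all of one index.

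The combinatorial input I would use is the following refinement of Lemma 1, which is what \cite{improvements} really establishes and which, like Lemma 1, holds for any cell partition of a product of cubes $P(n;\bm b)=U(r_1;n_1)\times\cdots\times U(r_l;n_l)$ (the proof there never uses that the $n_j$ are primes): \emph{if $\tau$ is a cell partition of $P(n;\bm b)$ whose cells are not all of one index, then for each $C\in\tau$ the number of cells of $\tau$ with index $I(C)$ is at least}
$$\min\Big\{\ \max_{1\le j\le l}\ \prod_{\,s\in(I(C')\setminus I(C))\cap B_j}b_s\ \ :\ \ C'\in\tau,\ I(C')\ne I(C)\ \Big\}.$$
This generalises Lemma 1: if $C$ is subset minimal, every competitor $C'$ has some coordinate in $I(C')\setminus I(C)$, that coordinate lies outside $I(C)$ and hence in a block of bound $\ge b=\min\{b_s:s\notin I(C)\}$, so the displayed quantity is $\ge b$.

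With the lemma available, Theorem 2 is a bookkeeping computation with the norm $G$. Fix $i$ and any $t$ with $I_t\ne I_i$. A coordinate of $B_j$ lies in $I(C_t)\setminus I(C_i)$ precisely when it is free in $C_t$ but fixed in $C_i$; by the nestedness of the free sets within $B_j$, these are the $\max(0,r_{i,j}-r_{t,j})$ coordinates occupying positions $r_{t,j}+1,\dots,r_{i,j}$ of $B_j$, each of bound $n_j$, so
$$\prod_{\,s\in(I(C_t)\setminus I(C_i))\cap B_j}b_s=n_j^{\,\max(0,\,r_{i,j}-r_{t,j})}=N\!\big(\mathfrak p_j^{\,\max(0,\,r_{i,j}-r_{t,j})}\big).$$
Since $I_i+I_t=\prod_{j=1}^l\mathfrak p_j^{\min(r_{i,j},r_{t,j})}$, one has $I_i/(I_i+I_t)=\prod_{j=1}^l\mathfrak p_j^{\max(0,\,r_{i,j}-r_{t,j})}$, whose prime-power divisors are exactly the $\mathfrak p_j^{\,r}$ with $1\le j\le l$ and $0\le r\le\max(0,r_{i,j}-r_{t,j})$; hence the largest norm of a prime-power divisor is $\max_{1\le j\le l}n_j^{\,\max(0,r_{i,j}-r_{t,j})}$, that is,
$$G\!\left(\frac{I_i}{I_i+I_t}\right)=\max_{1\le j\le l}\ \prod_{\,s\in(I(C_t)\setminus I(C_i))\cap B_j}b_s.$$
Plugging this into the lemma with $C=C_i$ and $C'$ ranging over the $C_t$ with $I_t\ne I_i$ — a nonempty range because, by hypothesis, some modulus differs from $I_i$, which is precisely the hypothesis the lemma needs — the number of repetitions of $I_i$ is at least $\min\{\,G(I_i/(I_i+I_t)):I_t\ne I_i\,\}$, as claimed.

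The one genuine obstacle is the parallelotope lemma itself. Its proof is that of \cite{improvements}, which transcribes without change; for a self-contained treatment one argues by induction on $n$ and reduces immediately to $C$ subset minimal (otherwise some competitor contributes the factor $1$ and there is nothing to prove). The single-block case $l=1$ is the transparent model, and it is clean when, as in the cells coming from our covering system, every cell fixes an initial segment of the block: subset minimality forces $C$ to fix the maximal number $c$ of coordinates occurring among cells of $\tau$, so no cell fixes more; the cells fixing fewer than $c$ coordinates tile a set $Y$, and since each of them is a union of cells fixing the first $c'$ coordinates (with $c'<c$ the next largest such number), $Y$ — and therefore also $Z:=P\setminus Y$ — is a union of such ``prefix'' cells; and $Z$, nonempty because it contains $C$, is tiled exactly by the cells of index $I(C)$, each of size $n_1^{\,n-c}$, so their number is $|Z|/n_1^{\,n-c}$, a positive multiple of $n_1^{\,c-c'}$ — the asserted bound. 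For $l>1$ one slices $P(n;\bm b)$ along a coordinate fixed by $C$ and runs the inductive hypothesis on the slices; I expect the heaviest part to be keeping track of how the competitor cells split between, and are shared by, the slices, which is exactly the work carried out in \cite{improvements}. Everything else above is routine verification.
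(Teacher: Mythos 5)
Your bookkeeping is fine --- the identification of the repetitions of $I_i$ with the cells of index $I(C_i)$, and the identity $G\!\left(I_i/(I_i+I_t)\right)=\max_j n_j^{\max(0,\,r_{i,j}-r_{t,j})}=\max_j\prod_{s\in(I(C_t)\setminus I(C_i))\cap B_j}b_s$, are both correct --- but the entire weight of the theorem rests on your ``refinement of Lemma 1'', and that is exactly what you have not proved. You assert it is ``what \cite{improvements} really establishes'' and that its proof ``transcribes without change''; this is doubtful and, in any case, unverified. The geometric input that \cite{improvements} supplies (and that the paper invokes) concerns a partition of the box $\prod_j\{0,\dots,n_j^{r_j}-1\}$ into interval-product subsets one of which is a \emph{single point}, i.e.\ the case of a modulus divisible by all the others; it is not a statement about an arbitrary cell $C$ in an arbitrary cell partition of $P(n;\bm{b})$ with a blockwise $\max$--$\min$ bound. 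If a lemma of your strength were available off the shelf, Theorem 2 would follow from Lemma 2 exactly as Theorem 1 follows from Lemma 1, and the paper's Lemmas 3--5 would be superfluous; instead the authors introduce a second map $\bar f$ into $P(l;\bm{d})$ with $\bm{d}=(n_1^{r_1},\dots,n_l^{r_l})$, prove the special case in which some $I_1$ is divisible by every modulus (Lemma 3, where $\bar f(\alpha_1\,\text{mod}\,I_1)$ is a single point, so the configuration really is the one treated in \cite{improvements}), and then reduce a general division-maximal $I_j$ to that case by an arithmetic construction: Lemma 4 (every class can be shifted into the representative set $S_{I_j}$ modulo elements of $I_j$) and Lemma 5 ($\{\alpha_i\,\text{mod}\,(I_i+I_j)\mid i\in J\}$ is again an exact covering system, with all moduli dividing $I_j$). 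Your proposal has no counterpart to this reduction; it is silently absorbed into the unproved lemma.

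Your own attempt to substantiate the lemma confirms the gap: the self-contained sketch covers only the single-block case $l=1$ with prefix-fixed cells --- essentially the case where $I$ is a prime power --- and for $l>1$ you write that you ``expect'' the slicing bookkeeping to be ``exactly the work carried out in \cite{improvements}''. That multi-block interaction (a $\max$ over blocks inside a $\min$ over competitors, for a distinguished cell which need not be subset minimal) is precisely the hard part, and it is not a transcription of the published argument, whose handling of a modulus other than the least common multiple goes through an arithmetic reduction using the covering-system structure, not merely the cell structure. So either prove your general cell-partition lemma in full (which would indeed give a genuinely more combinatorial route, bypassing Lemmas 3--5), or follow the paper: establish the reduction of Lemmas 4--5 and apply the $\bar f$-picture of Lemma 3. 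As written, the proposal assumes the theorem's hardest step.
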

It is easy to see that this bound is better than the bound in Theorem 1. For an
ideal $I_i$, if $I_i$ is not division maximal, then there exists
$I_j\neq I_i$ and $I_i|I_j$, so $I_i+I_j=I_i$ and
$G(I_i/(I_i+I_j))=G(\mathcal{O}_K)=1$. This bound is trivial. If
$I_i$ is division maximal, then for each $I_j$ with $I_j\ne I_i$,
we have $ I_i+I_j\supsetneq I_i$. Thus $G(I_i/(I_i+I_j))\ge \min\{n_j\mid\ \mathfrak{p}_j|I_i\}$
and the bound is better.

We need a new map to prove this result. Let $B_j$, $f_j$, $B_j^r$ be
just as in the last section. Now we define a map
\begin{eqnarray*}
\bar{f}_j^r:\quad B_j^r &\rightarrow&
\{0,1,\dots,n_j^r-1\}\\
\sum_{k=0}^{r-1}\gamma_k\,t_j^k &\mapsto&
\sum_{k=0}^{r-1}f_j(\gamma_k)\,n_j^{r-1-k}.
\end{eqnarray*}
The map $\bar{f}_j^r$ sends an element of $B_j^r$ to an integer represented in base $n_j$. The
coefficient of $t_j^k$ corresponds to the coefficient of
$n_j^{r-1-k}$. Obviously $\bar{f}_j^r$ is bijective. Now we can
define a map $\bar{f}$ that sends each $\alpha+I$ to a point
in $P(l; \bm{d})$, here $\bm{d}=(n_1^{r_1}, n_2^{r_2},\dots,
n_l^{r_l})$. Assume for each $1\le j \le l$,
$$\alpha \equiv \sum_{k=0}^{r_j-1}\gamma_{j,k}\,t_j^k \mod
\mathfrak{p}_j^{r_j},$$ then define
$$\bar{f}(\alpha+I)=(\bar{f}_1^{r_1}(\sum_{k=0}^{r_1-1}\gamma_{1,k}\,t_1^k),\dots,
\bar{f}_l^{r_l}(\sum_{k=0}^{r_l-1}\gamma_{l,k}\,t_l^k)).$$
Obviously $\bar{f}$ is bijective too. It maps each $\alpha_i+I_i$
to a subset of $P(l;\bm{d})$. Taking
$\alpha_1+I_1$ for example, just as in the above,
$$S_j=\{\sum_{k=r_{1,j}}^{r_j-1}\gamma_k\,t_j^k|\ \gamma_k \in
B_j\}$$ is a complete set of representatives of
$\mathfrak{p}_j^{r_{1,j}}$ modulo $\mathfrak{p}_j^{r_j}$, for
$1\le j \le l$. If
$$\alpha_1 \equiv \sum_{k=0}^{r_{1,j}-1}a_{j,k}\,t_j^k\mod \mathfrak{p}_j^{r_{1,j}},$$
then $\alpha_1+I_1$ is just the union of all the residue classes $\beta+I $ with
$$\beta \equiv \sum_{k=0}^{r_{1,j}-1}a_{j,k}\,t_j^k+s_j \mod \mathfrak{p}_j^{r_j}\mbox{ with } s_j \in S_j\mbox{ and } 1\leq j\leq l.$$
As each $\gamma_k$ can be an arbitrary element of $B_j$, we have
$$\bar{f}(\alpha_1+I_1)=C_1\times C_2\times \dots\times C_l,$$
where, according to the definition of $\bar{f}_j^{r_j}$,
$$C_j=\{c_j, c_j+1,\dots,c_j+n_j^{r_j-r_{1,j}}-1\}\subseteq\{0,1,\dots,n_j^{r_j}-1\}$$
and $c_j$ is some nonnegative integer such that
$n_j^{r_j-r_{1,j}}|c_j$. This is the key point in the following proof.

Now we begin to prove the above theorem. We first prove the
special case that there exists some $I_i$ such that $I_j|I_i$ for
all $j$. Without loss of generality, we may assume $I_1$ is such
an ideal, i.e. $r_{1,j}=r_{j}$, for each $1\le j \le l$. In the following proof, we use
$\bar{f}(\alpha_1+I_1)$ as the image of $\bar{f}$ at $\alpha_1+I_1$, and also use it as a singleton. We hope this doesn't cause confusion.

\begin{Lem}
Let $\{\alpha_i+I_i\}_{i=1}^k$ be an exact covering system
with $I_j|I_1$ for all $1\leq j\leq k$, and not all the ideals the same. Then
$I_1$ must be repeated at least
$$\min\{G(I_1/I_j)\mid\ \ {I_j\ne I_1}\}$$ times.
\end{Lem}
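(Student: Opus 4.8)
The plan is to push the statement, via the bijection $\bar f$ constructed just above, into a purely combinatorial fact about tilings of the lattice parallelotope $P(l;\bm d)$ with $\bm d=(n_1^{r_1},\dots,n_l^{r_l})$ by \emph{boxes} (products of dyadic intervals), and then to argue by induction on $n=\sum_{j=1}^{l}r_j$. First I would normalize: since $I_1=I$ we have $r_{1,j}=r_j$ for all $j$, so $\bar f(\alpha_1\,\mathrm{mod}\,I_1)$ is a single point of $P(l;\bm d)$, and replacing the covering system by $\{\alpha_i-\alpha_1\,\mathrm{mod}\,I_i\}_{i=1}^{k}$ (again an exact covering of $\mathcal O_K$, with exactly the same ideals) we may assume this point is the origin. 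A class $\alpha_i\,\mathrm{mod}\,I_i$ is a one-point box precisely when $I_i=I$, while $G(I_1/I_j)=\max_{1\le m\le l}n_m^{\,r_m-r_{j,m}}$ is exactly the longest edge of the box $\bar f(\alpha_j\,\mathrm{mod}\,I_j)$. So the lemma is equivalent to: \emph{in a box tiling of $P(l;\bm d)$ that has at least one one-point box and at least one non-one-point box, the number of one-point boxes is at least $g:=\min\{\,\text{longest edge of }B:\ B\text{ a non-one-point box of the tiling}\,\}$.}

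For the induction, note first that when $n\le1$ the two hypotheses are incompatible, so there is nothing to prove (this already subsumes $l=1,\ r_1=1$; when $l=1,\ r_1\ge2$ the argument is the elementary remark that the largest dyadic subblock of $\{0,\dots,n_1^{r_1}-1\}$ that is a union of one-point boxes contains that many one-point boxes, giving $t\ge$ its length, while the next dyadic block up, being properly larger and properly subdivided, must contain a non-trivial piece, necessarily of length at most that of the subblock, giving $g\le t$). For $n\ge2$ choose a prime $\mathfrak p_{m^*}\mid I$ and cut $P(l;\bm d)$ along the top digit of the $m^*$-th coordinate into $n_{m^*}$ congruent pieces $Q_0,\dots,Q_{n_{m^*}-1}$, each naturally a copy of the parallelotope $P(l;\bm d')$ attached to $\mathcal O_K/I'$ with $I'=I\mathfrak p_{m^*}^{-1}$ ($\bm d'$ being $\bm d$ with $n_{m^*}^{r_{m^*}}$ replaced by $n_{m^*}^{r_{m^*}-1}$). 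A box with $\mathfrak p_{m^*}\mid I_i$ lies inside a single piece, a box with $\mathfrak p_{m^*}\nmid I_i$ meets all $n_{m^*}$ pieces in equal dyadic boxes, and in either case the tiling induces a box tiling of each $Q_a$ — equivalently, the corresponding residue class modulo $\mathfrak p_{m^*}$ carries an exact covering of $\mathcal O_K/I'$. In the piece $Q_0$ containing the origin the class of $\alpha_1$ is still the single-residue class of the full ideal $I'$, so $Q_0$ is once more in the situation of the present lemma, now with the smaller invariant $n-1$; applying the inductive hypothesis to $Q_0$ bounds from below the number of one-point boxes lying in $Q_0$, hence a fortiori the total number $t$.

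The point that needs care — and the step I expect to be the real obstacle — is the choice of $m^*$ and the control of edge lengths under the cut, so that the bound coming from the induction is not smaller than $g$. Deleting $\mathfrak p_{m^*}$ divides every $m^*$-edge by $n_{m^*}$: a box $B$ with $\mathfrak p_{m^*}\mid I_B$ keeps its longest edge unless that edge was in the $m^*$-direction, whereas a box $B$ with $\mathfrak p_{m^*}\nmid I_B$ spreads over all the pieces and may have its relevant edge-length in each piece divided by $n_{m^*}$. Let $B_{j_0}$ realize $g$, with longest edge $n_{m_0}^{\,r_{m_0}-r_{j_0,m_0}}=g$ in direction $m_0$. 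If $r_{j_0,m_0}\ge1$ (so $r_{m_0}\ge2$ because $g\ge2$), take $m^*=m_0$: then $\mathfrak p_{m^*}\mid I_{j_0}$, the longest edge of $B_{j_0}$ is untouched, and moreover $n_{m^*}^{\,r_{m^*}-1}\ge g$, which is exactly what forces every non-one-point box of $Q_0$ to still have edge-max $\ge g$ and a one-point box of $Q_0$ to be exactly a one-point box of the original; so the inductive hypothesis on $Q_0$ alone already gives $t\ge g$. The residual case is when every minimizer $B_{j_0}$ has $r_{j_0,m_0}=0$ for each of its longest directions $m_0$ (so $g=n_{m_0}^{\,r_{m_0}}$ and $B_{j_0}$ runs the full length of the $m_0$-axis); here one picks $m^*\neq m_0$ dividing $I_{j_0}$ and has to sum the inductive bounds over $Q_0,\dots,Q_{n_{m^*}-1}$ — keeping track of the pieces that contain no one-point box, of the fact that a box with $\mathfrak p_{m^*}\nmid I_B$ contributes identically in every piece and, when $r_{m^*}=1$, even becomes a one-point box of every piece — or else disposes of this configuration directly (it is exactly the degenerate shape encountered already in the two-prime model). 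Making this last case completely rigorous is where the real work lies; the rest is the digit / Chinese-Remainder-Theorem translation already set up before the lemma.
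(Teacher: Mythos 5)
Your reduction is exactly the paper's: via $\bar f$ the exact covering becomes a partition of $P(l;\bm d)$ into aligned boxes, $\bar f(\alpha_1\,\mathrm{mod}\,I_1)$ is a single point, $I_j=I_1$ corresponds precisely to a one-point box, and $G(I_1/I_j)=\max_m n_m^{\,r_m-r_{j,m}}$ is the longest edge of $\bar f(\alpha_j\,\mathrm{mod}\,I_j)$; this translation is correct and faithful. The divergence is in what comes next: the paper does not reprove the resulting counting statement but appeals to the argument of Berger--Felzenbaum--Fraenkel (``just as the proof in \cite{improvements}''), whereas you attempt a self-contained induction on $n=\sum_j r_j$ by slicing off the top digit of one coordinate. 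Your first case (some minimizing box $B_{j_0}$ has a longest direction $m_0$ with $r_{j_0,m_0}\ge 1$; cut along $m^*=m_0$) does go through: since $n_{m^*}^{\,r_{m^*}-1}\ge g$, every non-one-point box of the induced tiling of the origin slice still has longest edge $\ge g$, no spurious one-point boxes can appear, and even if the slice consists entirely of singletons one still gets at least $n_{m^*}^{\,r_{m^*}-1}\ge g$ of them, all of which are original copies of $I_1$.

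The genuine gap is the residual case, which you yourself flag and do not close: when every minimizer attains its longest edge only in directions $m_0$ with $r_{j_0,m_0}=0$, i.e.\ spans the whole $m_0$-axis. Cutting along any admissible $m^*$ then divides the $m^*$-edge of every box $B$ with $\mathfrak p_{m^*}\nmid I_B$ by $n_{m^*}$, so the per-slice minimum of longest edges can fall below $g$, and when $r_{m^*}=1$ such spanning boxes even become one-point boxes of every slice that do not correspond to repetitions of $I_1$; consequently neither the inductive bound on the origin slice nor a naive sum of per-slice bounds delivers $g$, and your sketch only lists the bookkeeping that would be needed (``making this last case completely rigorous is where the real work lies''). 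As written, the combinatorial core of the lemma is therefore neither proved nor cited, and this case is exactly where the content of the statement sits. The quickest repair is the paper's own: note that the boxes produced by $\bar f$ have precisely the aligned prime-power structure treated in \cite{improvements} and import that proof; otherwise the residual configuration needs a complete argument, not an outline.
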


\begin{proof}
It is obvious that $\{\bar{f}(\alpha_i+I_i)|1\le i \le k\}$ is a
partition of $P(l;\bm{d})$, here $\bm{d}$ and $\bar{f}(\alpha_i+I_i)$ are defined as above and
$\bar{f}(\alpha_1+I_1)$ is a singleton. Denote $x=\min\{G(I_1/I_j)\mid\ {I_j\ne I_1}\}$. We are going to prove that in such a partition there are at least $x$ singletons, since each singleton corresponds to an ideal equal to $I_1$, hence $I_1$ must be repeated at least $x$ times.

First we consider the case when the point $\bar{f}(\alpha_1+I_1)\in U(l; x)$. According to the definition of $x$, the cardinality of $U(l;x)\cap P(l;\bm{d})$ is a multiple of $x$. We have that $$\{\bar{f}(\alpha_i+I_i)\cap U(l;x)\ |\ 1\le i \le k\}$$ is a partition of $U(l;x)\cap P(l;\bm{d})$ and
$$\bar{f}(\alpha_i+I_i)=C_1\times C_2\times \dots\times C_l,$$ where
$$C_j=\{c_j, c_j+1,\dots,c_j+n_j^{r_j-r_{i,j}}-1\}\subseteq\{0,1,\dots,n_j^{r_j}-1\}$$ and $n_j^{r_j-r_{i,j}}|c_j$.
If $I_i=I_1$, then $\bar{f}(\alpha_i+I_i)\cap U(l;x)$
is either a singleton or empty. There is nothing to prove. For $I_i\neq I_1$, assume $\bar{f}(\alpha_i+I_i)\cap U(l;x)$ is not empty, then we have
$C_j\cap\{0,1,\dots,x-1\}\ne \emptyset$ for all $1\leq j\leq l$. Hence $c_j<x$. Let $G(I_1/I_i)=n_j^{r_j-r_{i,j}}$ for some $j$, then
$x\le n_j^{r_j-r_{i,j}}$. We have that $c_j=0$ for this $j$, and
the cardinality of $\bar{f}(\alpha_i+I_i)\cap U(l;x)$ is a multiple of $x$. We conclude that all the nonempty
$\bar{f}(\alpha_i+I_i)\cap U(l;x)$ are singletons or have cardinality divisible by $x$ and if $I_i\neq I_1$, $\bar{f}(\alpha_i+I_i)\cap U(l;x)$ can not be a singleton. As there is one singleton $\bar{f}(\alpha_1+I_1)$, there are at least $x$ singletons. Therefore $I_1$ must be repeated at least $x$ times.

For the case $\bar{f}(\alpha_1+I_1) \notin U(l;x)$, we translate $U$ by a certain vector $\bm{v}=(v_1,\dots,v_l)$ and denote it by $U'$.
Let $\bar{f}(\alpha_1+I_1)=(a_1,\dots,a_l)\in P(l;\bm{d})$. We are going to define each $v_j$ as follows. For $1\leq j\leq l$, we distinguish two cases. For the $j-$th axis, if $G(I_1/I_m)=n_j^e$ for some $m$ with $I_m\ne I_1$, we let $e_j$ be the smallest $e$ with $1<G(I_1/I_m)=n_j^e$. There exists a unique nonnegative integer $N_j$ such that
$N_j\cdot n_j^{e_j}\le a_j< (N_j+1)\cdot n_j^{e_j}$. As $x\le n_j^{e_j}$, we can choose a nonnegative integer $v_j$ such that
$$a_j \in \{v_j,v_j+1,\dots,v_j+x-1\}\subseteq \{N_j\cdot n_j^{e_j},N_j\cdot n_j^{e_j}+1,\dots,(N_j+1)\cdot n_j^{e_j}-1\}.$$
For the $j-$th axis, if $G(I_1/I_m)\ne n_j^e$ for all $m$ with $I_m\ne I_1$ and $e>0$, we can choose a nonnegative integer $v_j$ such that $$a_j \in \{v_j,v_j+1,\dots,v_j+x-1\}.$$

It is obvious that $\bar{f}(\alpha_1+I_1)\in U'$. Now we want to prove that the cardinality of $U'\cap P(l;\bm{d})$ is a multiple of $x$.

According to the definitions of $x$ and $e_j$'s, $x=n_{j'}^{e_{j'}}$ for some $j'$ with $1\le j'\le l$. As
$N_{j'}\cdot n_{j'}^{e_{j'}}\le a_{j'}\le n_{j'}^{r_{j'}}-1$, then $N_{j'}\cdot n_{j'}^{e_{j'}}<n_{j'}^{r_{j'}}$. Both sides are multiples of $n_{j'}^{e_{j'}}$, so we have $n_{j'}^{r_{j'}}\ge (N_{j'}+1)\cdot n_{j'}^{e_{j'}}$, which means
$$\{N_{j'}\cdot n_{j'}^{e_{j'}},N_{j'}\cdot n_{j'}^{e_{j'}}+1,\dots,(N_{j'}+1)\cdot n_{j'}^{e_{j'}}-1\}\subseteq \{0,1,2,\dots, n_{j'}^{r_{j'}}-1\}.$$
Hence the cardinality of $U'\cap P(l;\bm{d})$ is a multiple of $x$.

Now we consider the intersections $\bar{f}(\alpha_i+I_i)\cap U'$ with $1\leq i\leq k$. We want to prove that these sets are either empty, or singletons or have a cardinality divisible by $x$.

If $I_i=I_1$, $\bar{f}(\alpha_i+I_i)\cap U'$ is obviously either a singleton or empty. For $I_i\neq I_1$, $G(I_1/I_i)=n_j^{r_j-r_{i,j}}$ for some $j$, and for this $j$ we have defined $v_j$ and $N_j$. Let $C_j$ be defined as above. If $\bar{f}(\alpha_i+I_i)\cap U'$ is not empty, $C_j\cap \{v_j,v_j+1,\dots,v_j+x-1\}\ne \emptyset$, and hence
$$C_j\cap \{N_j\cdot n_j^{e_j},N_j\cdot n_j^{e_j}+1,\dots,(N_j+1)\cdot n_j^{e_j}-1\} \ne \emptyset.$$
According to the definition of $e_j$, we have $e_j\le r_j-r_{i,j}$ and then $n_j^{e_j}|c_j$. We claim that $c_j\le N_j\cdot n_j^{e_j}$. If not, as $n_j^{e_j}|c_j$,
then $c_j\ge(N_j+1)\cdot n_j^{e_j}$, a contradiction. For the same reason
$c_j+n_j^{r_j-r_{1,j}}-1\ge (N_j+1)\cdot n_j^{e_j}-1$, so
$$\{v_j,v_j+1,\dots,v_j+x-1\}\subseteq \{N_j\cdot n_j^{e_j},N_j\cdot n_j^{e_j}+1,\dots,(N_j+1)\cdot n_j^{e_j}-1\}\subseteq C_j.$$
Then for $I_i\ne I_1$, $\bar{f}(\alpha_i+I_i)\cap U'$ is either empty or has cardinality divisible by $x$.
As in the first case, there are at least $x$ singletons and $I_1$ must be repeated at least $x$ times.
\end{proof}

Let $\{\alpha_i+I_i\}_{i=1}^k$ be any exact covering
system in $K$. We are going to get an exact covering system with the
above property from it. As $B_j$ is a complete set of representatives of
$\mathcal{O}_K$ modulo $\mathfrak{p}_j$, we let the
representative of $0$ modulo $\mathfrak{p}_j$ be just $0$, i.e.
$0\in B_j$ for all $1\le j\le l$. Assume $S$ is a complete set of
representatives of $\mathcal{O}_K$ modulo $I$. If $I'$ is any
ideal with $I'|I$, then $I'$ has the following decomposition
$$I'=\prod_{j=1}^l\mathfrak{p}_j^{s_j}$$ with $0\le s_j\le r_j$. We
define a subset $S_{I'}$ of $S$ by $$S_{I'}=\{\alpha\in S\mid\
\alpha\equiv\sum_{k=0}^{s_j-1}a_{j,k}t_j^{k}\mod
\mathfrak{p}_j^{r_j},\,\text{for all}\ 1\le j\le l\ \}.$$ The set $S_{I'}$
includes these elements of $S$
having the last $r_j-s_j$ coefficients 0 modulo $\mathfrak{p}_j^{r_j}$. It is easy to see that
$S_{I'}$ is a complete set of representatives of
$\mathcal{O}_K$ modulo $I'$.

\begin{Lem}
For each $(\alpha_i+I_i)\cap S_{I'}\neq\emptyset$, we
have
$$\alpha_i+I_i\subseteq(\alpha_i+I_i)\cap
S_{I'}+I',$$ which means that each element of
$\alpha_i+I_i$ can be written as a sum of an element
of $(\alpha_i+I_i)\cap S_{I'}$ and an element of
$I'$.
\end{Lem}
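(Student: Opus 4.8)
\textit{Proof proposal.}\quad The plan is to reduce everything, via the Chinese Remainder Theorem and the $t_j$-adic expansions introduced above, to a statement about ``digits''. For $x\in\mathcal{O}_K$ and $1\le j\le l$ write $x\equiv\sum_{k=0}^{r_j-1}\gamma_{j,k}(x)\,t_j^k\pmod{\mathfrak{p}_j^{r_j}}$ with $\gamma_{j,k}(x)\in B_j$ (this is unique since $B_j^{r_j}$ is a complete set of representatives of $\mathcal{O}_K\,\text{mod}\,\mathfrak{p}_j^{r_j}$, using $t_j\in\mathfrak{p}_j\backslash\mathfrak{p}_j^2$). Truncating such an expansion shows that for $0\le m\le r_j$ one has $x\equiv y\pmod{\mathfrak{p}_j^m}$ iff $\gamma_{j,k}(x)=\gamma_{j,k}(y)$ for $0\le k\le m-1$. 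Writing $I'=\prod_{j=1}^l\mathfrak{p}_j^{s_j}$ and $\alpha_i\equiv\sum_{k=0}^{r_{i,j}-1}a_{j,k}t_j^k\pmod{\mathfrak{p}_j^{r_{i,j}}}$ as above, I then get three dictionaries I will use repeatedly: $x\in\alpha_i\,\text{mod}\,I_i$ iff $\gamma_{j,k}(x)=a_{j,k}$ for $0\le k\le r_{i,j}-1$ and all $j$; $x\equiv y\pmod{I'}$ iff $\gamma_{j,k}(x)=\gamma_{j,k}(y)$ for $0\le k\le s_j-1$ and all $j$; and $x\in S_{I'}$ implies $\gamma_{j,k}(x)=0$ for $s_j\le k\le r_j-1$ and all $j$, which is precisely how $S_{I'}$ was defined.

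The key steps are then: (1)~\emph{Extract information from the hypothesis.} Fix $\delta\in(\alpha_i\,\text{mod}\,I_i)\cap S_{I'}$. For every $j$ with $s_j<r_{i,j}$, the coefficient $\gamma_{j,k}(\delta)$ equals $a_{j,k}$ for $0\le k\le r_{i,j}-1$ and equals $0$ for $s_j\le k\le r_j-1$; comparing on the overlap $s_j\le k\le r_{i,j}-1$ forces $a_{j,k}=0$ there. (2)~\emph{Truncate.} Given an arbitrary $\beta\in\alpha_i\,\text{mod}\,I_i$, let $\gamma$ be the unique element of $S_{I'}$ with $\gamma\equiv\beta\pmod{I'}$; this exists since $S_{I'}$ is a complete set of representatives of $\mathcal{O}_K\,\text{mod}\,I'$. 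By the dictionaries, $\gamma_{j,k}(\gamma)=\gamma_{j,k}(\beta)$ for $0\le k\le s_j-1$ and $\gamma_{j,k}(\gamma)=0$ for $s_j\le k\le r_j-1$, all $j$, while $\beta\in\gamma+I'$ is immediate. (3)~\emph{Verify $\gamma\in\alpha_i\,\text{mod}\,I_i$.} For $0\le k\le\min(s_j,r_{i,j})-1$ we have $\gamma_{j,k}(\gamma)=\gamma_{j,k}(\beta)=a_{j,k}$ since $\beta\in\alpha_i\,\text{mod}\,I_i$; and for $s_j\le k\le r_{i,j}-1$ (possible only when $s_j<r_{i,j}$) we have $\gamma_{j,k}(\gamma)=0=a_{j,k}$ by step~(1). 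Hence $\gamma\in(\alpha_i\,\text{mod}\,I_i)\cap S_{I'}$, so $\beta=\gamma+(\beta-\gamma)\in(\alpha_i\,\text{mod}\,I_i)\cap S_{I'}+I'$. As $\beta$ was arbitrary, $\alpha_i\,\text{mod}\,I_i\subseteq(\alpha_i\,\text{mod}\,I_i)\cap S_{I'}+I'$; since $I\subseteq I_i$ gives $\alpha_i\,\text{mod}\,I\subseteq\alpha_i\,\text{mod}\,I_i$, the displayed inclusion follows.

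The one place with genuine content is step~(1): the hypothesis $(\alpha_i\,\text{mod}\,I_i)\cap S_{I'}\neq\emptyset$ is exactly what guarantees that the conditions cutting out $\alpha_i\,\text{mod}\,I_i$ on the ``high'' coefficients (those $k$ with $s_j\le k<r_{i,j}$) are the trivial ones, which is what lets the truncation $\gamma$ of $\beta$ remain inside $\alpha_i\,\text{mod}\,I_i$; dropping the hypothesis makes the statement false. The rest is bookkeeping with the Chinese Remainder Theorem and the coefficient dictionary, so I anticipate no further obstacle; the only routine point to double-check is the equivalence ``$x\equiv y\pmod{\mathfrak{p}_j^m}$ iff the first $m$ coefficients agree'', which follows from $t_j^k\in\mathfrak{p}_j^k\backslash\mathfrak{p}_j^{k+1}$ together with $B_j$ being a complete residue system mod $\mathfrak{p}_j$.
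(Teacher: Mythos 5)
Your proof is correct and follows essentially the same route as the paper's: you truncate an arbitrary element of $\alpha_i\,\text{mod}\,I_i$ to its representative in $S_{I'}$ and check membership in the class digit by digit, with your step (1) (the hypothesis forcing $a_{j,k}=0$ for $s_j\le k<r_{i,j}$) being exactly the content of the paper's case $s_j<r_{i,j}$, and your case $k<\min(s_j,r_{i,j})$ matching its case $s_j\ge r_{i,j}$. The only difference is organizational: you set up the coefficient ``dictionary'' explicitly instead of writing elements as $\alpha+\beta$ with $\beta\in I_i$, which is a cleaner bookkeeping of the same argument.
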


\begin{proof}
As $(\alpha_i+I_i)\cap S_{I'}\neq\emptyset$, choose
$\alpha\in(\alpha_i+I_i)\cap S_{I'}$, then
$$\alpha\equiv\sum_{k=0}^{s_j-1}a_{j,k}\,t_j^{k}\mod
\mathfrak{p}_j^{r_j}\mbox{ with } a_{j,k}\in B_j.$$ Every element in
$\alpha_i+I_i$ is of the form $\alpha+\beta$ with
$\beta\in I_i$, and assume
$$\beta\equiv\sum_{k=r_{i,j}}^{r_j-1}b_{j,k}\,t_j^{k}\mod
\mathfrak{p}_j^{r_j}\mbox{ with } b_{j,k}\in B_j,$$
$$\alpha+\beta\equiv\sum_{k=0}^{r_j-1}c_{j,k}\,t_j^{k}\mod
\mathfrak{p}_j^{r_j}\mbox{ with } c_{j,k}\in B_j.$$ Choose the element
$\alpha'\in S_{I'}$ with
$$\alpha'\equiv\sum_{k=0}^{s_j-1}c_{j,k}\,t_j^{k}\mod
\mathfrak{p}_j^{r_j},$$ and let $\gamma=\alpha+\beta-\alpha'$, then
$$\gamma\equiv\sum_{k=s_j}^{r_j-1}c_{j,k}\,t_j^{k}\mod
\mathfrak{p}_j^{r_j},$$ so $\gamma\in I'$. As
$\alpha+\beta=\alpha'+\gamma$, we only need to prove
$\alpha'\equiv\alpha\mod I_i$. We have
$$\sum_{k=0}^{s_j-1}a_{j,k}t_j^{k}+\sum_{k=r_{i,j}}^{r_j-1}b_{j,k}\,t_j^{k}\equiv
\sum_{k=0}^{s_j-1}c_{j,k}\,t_j^{k}+\sum_{k=s_j}^{r_j-1}c_{j,k}\,t_j^{k}\mod
\mathfrak{p}_j^{r_j},$$ for $1\le j\le l$. If $s_j\ge r_{i,j}$,
the last item of each side in the above equation belongs to
$\mathfrak{p}_j^{r_{i,j}}$, so
$$\sum_{k=0}^{s_j-1}a_{j,k}t_j^{k}\equiv\sum_{k=0}^{s_j-1}c_{j,k}\,t_j^{k}\mod
\mathfrak{p}_j^{r_{i,j}}.$$ If $s_j<r_{i,j}$, we just have
\begin{align*}
c_{j,k}&=a_{j,k}\hspace{10mm}\text{for}\ 0\le k\le s_j-1,\\
c_{j,k}&=0\hspace{14mm}\text{for}\ s_j\le k\le r_{i,j}-1,\\
c_{j,k}&=b_{j,k}\hspace{10mm}\text{for}\ r_{i,j}\le k\le r_j-1.
\end{align*}
Hence we always have $\alpha'\equiv\alpha\mod
\mathfrak{p}_j^{r_{i,j}}$ for all $1\le j\le l$, then
$\alpha'\equiv\alpha\mod I_i$. The proof is complete.
\end{proof}

\begin{Lem}
Let $\{\alpha_i+I_i\}_{i=1}^k$ be an exact covering
system in $K$. For any fixed $j$ with $1\leq j\leq k$, let $S_{I_j}$ be defined
as above and $J=\{i\mid\ (\alpha_i+I_i)\cap
S_{I_j}\neq\emptyset\}$. Then
$$\{\alpha_i+(I_i+I_j)\mid\ i\in J\}$$ is an exact
covering system in $K$.
\end{Lem}

\begin{proof}
As $\{\alpha_i+I_i\}_{i=1}^k$ is an exact covering
system in $K$, we have
\begin{align*}
S_{I_j}&\subseteq\bigcup_{i=1}^k(\alpha_i+I_i)\cap S_{I_j}\\
&=\bigcup_{i\in J}(\alpha_i+I_i)\cap S_{I_j}\\
&\subseteq\bigcup_{i\in J}\alpha_i+I_i\\
&\subseteq\bigcup_{i\in J}\alpha_i+(I_i+I_j).
\end{align*}
Since $S_{I_j}$ is a complete set of representatives of
$\mathcal{O}_K$ modulo $I_j$ and each $(I_i+I_j)|I_j$, so
$\{\alpha_i+(I_i+I_j)\mid\ i\in J\}$ is a covering
system in $K$.

If $x\in\alpha_i+(I_i+I_j)$, then
$x=\alpha_i+\beta_1+\gamma_1$ with $\beta_1\in I_i$ and $\gamma_1\in I_j$.
From the above Lemma 4, we have $\alpha_i+\beta_1=\alpha'+\delta$ with
$\alpha'\in (\alpha_i+I_i)\cap S_{I_j}$ and $\delta\in
I_j$. So we may assume $$x=\beta+\gamma\mbox{ with }\beta
\in(\alpha_i+I_i)\cap S_{I_j}\mbox{ and }\gamma\in I_j.$$ Moreover
if $x\in\alpha_{i'}+(I_{i'}+I_j)$, then
$$x=\beta'+\gamma'\mbox{ with }\beta'\in(\alpha_{i'}+I_{i'})\cap
S_{I_j}\mbox{ and }\gamma'\in I_j.$$ We have $\beta\equiv\beta'\mod I_j$, but
different elements of $S_{I_j}$ are in different residue classes modulo $I_j$, so
$\beta=\beta'$. Then $\beta\in\alpha_i+I_i$ and also
$\beta\in\alpha_{i'}+I_{i'}$. As
$\{\alpha_i+I_i\}_{i=1}^k$ is an exact covering
system in $K$, we have $i=i'$. Hence
$\{\alpha_i+(I_i+I_j)\mid\ i\in J\}$ is an exact
covering system in $K$.
\end{proof}

\textit{Proof of Theorem 2}. \quad Let
$\{\alpha_i+I_i\}_{i=1}^k$ be an exact covering
system with not all the ideals the same. For an arbitrary ideal $I_j$ in the covering system, if $I_j$ is not division maximal, as we have seen before, the bound in Theorem 2 is trivial. So we can assume that $I_j$ is division maximal.

For a division maximal ideal $I_j$, let $J$ be defined as in Lemma 5.
By Lemma 5, $\{\alpha_i+(I_i+I_j)\mid\ i\in J\}$ is an exact
covering system in which each ideal $I_i+I_j$ is a divisor of $I_j$. It is easy to see we have that $j\in J$
and $I_j+I_j=I_j$. We are going to prove that not all the ideals $I_i+I_j$ for $i\in J$ are equal to $I_j$. Since $I_j$ is maximal, $I_i+I_j\neq I_j$ if and only if $I_i\neq I_j$. If $I_i+I_j=I_j$ for all $i\in J$, then $\{\alpha_i+(I_i+I_j)\mid\ i\in J\}$ is just the part $\{\alpha_i+I_i\mid\ I_i=I_j\}$ of the original exact
covering system. This part can not be a covering system as there exists $I_i\ne I_j$ for some $1\le i\le k$. Now we can use Lemma 3 to the new exact covering system. Hence, $I_j$
must be repeated at least
$$\min\{G(\frac{I_j}{I_i+I_j})\mid I_i+I_j\neq I_j,\ i\in J\}$$
times. Since
\begin{align*}
\min\{G(\frac{I_j}{I_i+I_j})\mid\ I_i+I_j\neq I_j,\ i\in J\}
&=\min\{G(\frac{I_j}{I_i+I_j})\mid\ I_i\neq I_j,\ i\in J\}\\
&\ge\min\{G(\frac{I_j}{I_i+I_j})\mid\ I_i\neq I_j\},
\end{align*}
the proof is complete. \qed

\end{document}